\theoremstyle{plain}
\newtheorem{theorem}{Theorem}[section]
\newtheorem{lemma}[theorem]{Lemma}
\theoremstyle{remark}
\newtheorem{definition}[theorem]{Definition}
\newtheorem*{fact}{Fact}
\newtheorem{corollary}[theorem]{Corollary}
\newtheorem{remark}[theorem]{Remark}
\newtheorem{proposition}[theorem]{Proposition}
\begin{document}

\begin{frontmatter}
\title{A New Bound on the Cumulant Generating Function of Dirichlet Processes}
\runtitle{Bound on the CGF of Dirichlet Processes}


\begin{aug}

\author[A]{\fnms{Pierre}~\snm{Perrault}\ead[label=e1]{pierre.perrault@outlook.com}},
\author[B]{\fnms{Denis}~\snm{Belomestny}\ead[label=e2]{denis.belomestny@uni-due.de}},
\author[C]{\fnms{Pierre}~\snm{M{\'e}nard}\ead[label=e3]{pmenard@meta.com}},
\author[D]{\fnms{{\'E}ric}~\snm{Moulines}\ead[label=e4]{eric.moulines@polytechnique.edu}},
\author[E]{\fnms{Alexey}~\snm{Naumov}\ead[label=e5]{anaumov@hse.ru}},
\author[D]{\fnms{Daniil}~\snm{Tiapkin}\ead[label=e6]{daniil.tiapkin@polytechnique.edu}},
\author[C]{\fnms{Michal}~\snm{Valko}\ead[label=e7]{mir@meta.com}}
\address[A]{IDEMIA\printead[presep={,\ }]{e1}}

\address[B]{Duisburg-Essen University, Germany\printead[presep={,\ }]{e2}}

\address[C]{Meta\printead[presep={,\ }]{e3,e7}}

\address[D]{Centre de Math{\'e}matiques Appliqu{\'e}es, CNRS, {\'E}cole Polytechnique, Institut Polytechnique de Paris, France\printead[presep={,\ }]{e4,e6}}

\address[E]{HSE University, Russia\printead[presep={,\ }]{e5}}


\end{aug}

\begin{abstract}

In this paper, we introduce a novel approach for bounding the cumulant generating function (CGF) of a Dirichlet process (DP) \(X \sim \DP(\alpha \nu_0)\), using superadditivity. In particular, our key technical contribution is the demonstration of the superadditivity of \(\alpha \mapsto \log \mathbb{E}_{X \sim \DP(\alpha \nu_0)}[\exp( \mathbb{E}_X[\alpha f])]\), where $\mathbb{E}_X[f] = \int f dX$. This result, combined with Fekete's lemma and Varadhan's integral lemma, converts the known asymptotic large deviation principle into a practical upper bound on the CGF \(
\log\EEs{X\sim \DP\pa{\alpha\nu_0}}{\exp\pa{\EEs{X}{f}}}
\) for any \(\alpha > 0\). The bound is given by the convex conjugate of the scaled reversed Kullback-Leibler
divergence $\alpha\KL{\nu_0}{\cdot}$.
This new bound provides particularly effective confidence regions for sums of independent DPs, making it applicable across various fields.
\end{abstract}

\begin{keyword}[class=MSC]
\kwd[Primary ]{60E15}
\kwd{62E17}
\kwd{62G15}
\kwd[; secondary ]{60G57}
\kwd{60F10}
\kwd{39B62}
\end{keyword}

\begin{keyword}
\kwd{Dirichlet Process}
\kwd{cumulant generating function}
\kwd{Varadhan’s integral lemma}
\kwd{superadditivity}
\kwd{Fekete’s lemma}
\kwd{confidence region}
\end{keyword}

\end{frontmatter}
\section{Introduction}
\label{sec:introduction}
The Dirichlet Process (DP) is a fundamental stochastic process in which each realization is itself a probability distribution. Originally introduced by Ferguson in the early 1970s \cite{ferguson1973bayesian}, the DP has become a fundamental tool in the field of nonparametric Bayesian statistics 
\cite{congdon2014applied,ghosal2017fundamentals,mueller2018nonparametric}. 
The use of DPs in statistical modeling provides a significant benefit: it enables the model’s complexity to adjust according to the data rather than requiring a fixed structure a priori.
Conceptually, the DP can be considered as a generalization of the Dirichlet distribution into infinite-dimensional spaces. Just as the Dirichlet distribution acts as the conjugate prior for categorical distributions, the DP serves as the conjugate prior for nonparametric, discrete distributions over infinite spaces.
Before proceeding further, we introduce the necessary notation. 
We focus on a compact metric space \( \Omega \), equipped with its Borel \(\sigma\)-algebra \( \cB(\Omega) \). The set \( \cM(\Omega) \) (and respectively \( \cM_1(\Omega) \)) denotes the space of finite non-negative (and probability) measures on \( \Omega \), and \( \cB(\cM_1(\Omega)) \) represents the Borel \(\sigma\)-algebra generated by the weak topology on \( \cM_1(\Omega) \). The set of continuous functions $f \colon \Omega \to \R$ is denoted by $\cC(\Omega)$.
For a given probability distribution \( p \), \( \E_p \) and \( \P_p \) denote the expectation and probability, respectively, with respect to \( p \). Alternatively, we use \( \E_{\xi \sim p} \) and \( \P_{\xi \sim p} \) to explicitly indicate that the random variable \( \xi \) follows the distribution \( p \).
\par
Consider a DP on $\Omega$, characterised by a scale parameter $\alpha>0$ and a base distribution $\nu_0\in \cM_1(\Omega)$ whose measure's law is denoted as $\DP\pa{\alpha\nu_0}$ and
 whose realization $X$ is a random probability measure on the space $\Omega$.
The original definition of the DP, introduced by Ferguson \cite{ferguson1973bayesian}, says that for any finite measurable partition $B_1\sqcup\dots\sqcup B_k=\Omega$, $(X(B_1),\dots,X(B_k))\sim \Dir(\alpha\nu_0(B_1),\dots,\alpha\nu_0(B_k))$, where 
$\Dir(\alpha_1,\ldots,\alpha_k)$
denotes the Dirichlet distribution with parameters $\alpha_1,\ldots,\alpha_k$. An alternative definition, known as the "stick-breaking" construction,
is 
$X=\sum_{k=1}^\infty \beta_k\prod_{\ell<k}(1-\beta_{\ell}) \delta_{\omega_k}$, where $\pa{\omega_k, \beta_k}\overset{iid}{\sim} \nu_0\times \text{Beta}\pa{1,\alpha}$ \cite{paisley2010simple}, where $\text{Beta}$ denotes the beta distribution.
Another important representation of the DP is analogous to the characterization of the Gamma distribution by \cite{lukacs1955characterization} and expressed as $X = G/G\pa{\Omega}$, where $G\sim\cG(\alpha\nu_0)$ is the standard Gamma process\footnote{One of the formulations of the Gamma process is derived from a Poisson process over the space $\Omega\times [0,\infty)$ with mean measure $\mu(d\omega,d t)=\alpha \nu_0(d\omega) t^{-1}e^{-t}dt$. Drawing a sample from this Poisson process generates an infinite set of atoms $(\omega_k,t_k)_{k\geq 1}$. The Gamma process can then be constructed as $G\triangleq \sum_{k=1}^\infty t_k \delta_{\omega_k}.$} on $\Omega$ with shape parameter $\alpha\nu_0$, i.e., $G(A)\sim \text{Gamma}\pa{\alpha\nu_0(A),1}$ for any measurable set $A\subset \Omega$ \cite{casella2005christian}. 

Given the widespread use of Dirichlet Processes, understanding concentration phenomena \cite{bernstein1924modification,hoeffding1963probability} and large deviation principles (LDPs, see Definition~\ref{def:LDP}) \cite{cramer1937nouveau,Varadhan1966asymptotic,bahadur1979large} is crucial, particularly in the context of their applications to fields such as machine learning \cite{blei2005variational}, reinforcement learning \cite{osband2013more,osband2017why}, topic modeling \cite{blei2003latent,teh2006hierarchical}, among others. 
In this paper,
we will focus on the study of concentration bounds for DPs. Indeed, the literature on LDPs for DPs is already well-established \cite{doss1982tails, lynch1987large,ganesh2000large,feng2007large,feng2023hierarchical} and it is known that the probability that $X\sim \DP\pa{\alpha\nu_0}$ deviates from another distribution $\nu$ decreases exponentially (with respect to $\alpha$) at a rate given by the Kullback–Leibler (KL) divergence (aka relative entropy) of $\nu_0$ with respect to $\nu$, defined as
\[\KL{\nu_0}{\nu}\triangleq\left\{
    \begin{array}{ll}
        \EEs{\nu_0}{ \log\pa{\frac{d\nu_0}{d\nu}}} & \mbox{if } \nu_0 \ll \nu \\
        \infty & \mbox{otherwise.}
    \end{array}
\right.\]
On the non-asymptotic side, research related to DPs remains sparse. The existing studies utilize the Gamma process-based representation defined above and afterward use a closed-form of the moment-generating function (MGF) formula for the Gamma process. This approach has been explored in various works; see \cite{van1997gamma, vershik2001remarks}.
In particular, for any $f\in \cC\pa{\Omega}$, such that \( f \leq 1 \) and \(\nu_0(f^{-1}(\{1\})) = 0\), the MGF of the Gamma process can be expressed as:
\begin{align}
\textstyle M_{\mathcal{G}(\alpha \nu_0)}(f) \triangleq \mathbb{E}_{G \sim \mathcal{G}(\alpha \nu_0)} \left[ \exp \left( \int f \, dG \right) \right] = \exp \left(-\alpha \mathbb{E}_{\nu_0} \left[ \log (1 - f) \right] \right). 
\label{rel:MGFGP}
\end{align}
While the Gamma process representation provides powerful tools for analyzing the concentration of an individual DP, extending this technique to multiple independent DPs is not feasible.


\par
Our primary objective in this paper is to establish new concentration bounds for multiple independent DPs. Moving forward, we intend to focus specifically on the MGF of the DP, as opposed to that of the Gamma process. 
Indeed, the MGF and its logarithm, the cumulant-generating function (CGF), are essential in both asymptotic and non-asymptotic statistical analyses. Specifically, they play a critical role in establishing LDPs using the Gärtner-Ellis theorem (see \cite{gartner1977large,ellis1984large}). Within a non-asymptotic realm, the multiplicative nature of MGFs for sums of independent random variables can be utilized to derive concentration inequalities through Chernoff bounds \cite{buldygin1980sub,buldygin2000metric,raginsky2013concentration,boucheron2013concentration,pisier2016subgaussian}.
In our context, bounding the CGF of a DP by a manageable expression would thus enable the derivation of new concentration inequalities for sums of independent DPs, which could have practical significance across various application domains.
\par
The study of moment-generating function (MGF) bounds, particularly sub-Gaussian properties, has been extensively explored due to its broad applicability across various fields. This line of research focuses on identifying the optimal proxy variance for different types of random variables, including discrete distributions such as the Bernoulli distribution \cite{buldygin2013sub,kearns2013large}, and continuous distributions like the Beta and Dirichlet distributions \cite{marchal2017sub}. While sub-Gaussian bounds are widely used due to their general applicability, they often do not provide as tight an estimate as those derived from the KL divergence (which are asymptotically optimal, as they match the LDP). Therefore, in this paper, we target a KL-based CGF bound on the DP.
\par
The rest of the article is structured as follows. The following section reviews LDP results for DPs and their relevance to concentration bounds. Next is our main result (Theorem~\ref{th:mgfbounddir}), which transforms a limit on a sequence of DP CGFs (focusing on the sequence considered in Varadhan's integral lemma \cite{Varadhan1966asymptotic}) into a bound.
Specifically, we utilize the superadditivity of the sequence (which is proved in the subsequent section), employing Fekete's superadditive lemma \cite{fekete1923verteilung}. The article concludes by applying this result to the stochastic semi-armed bandit problem \cite{kveton2015tight}.


\section{Large deviation principle}
We start by recalling the concept of a large deviation principle (LDP).
\begin{definition}[Rate function]
A function $I$ is a rate function if it is lower semicontinuous with values in $[0,\infty]$ (such that all level sets $\sset{x: I(x)\leq t},$ for $t\in [0,\infty)$, are closed). A rate function is good if the level sets are compact.  
The effective domain of $I$ is $D_I\triangleq \sset{x: I(x)<\infty}.$
\end{definition}
Notice, in the previous definition, we did not specify on which domain the rate function $I$ is defined. In our context, $I$ will be defined on $\cM_1\pa{\Omega}$.

\begin{definition}[Large deviation principle: LDP]\label{def:LDP}
    A sequence of probability measures $(\mu_n)$ satisfies an LDP with speed $n$ (we can avoid explicitly stating the speed if it is clear from the context) and rate function $I$ if:
    \begin{align*}
        (i)~~~~&\mbox{For all closed sets $F$, $\limsup_{n}\frac{1}{n}\log\mu_n\pa{F} \leq -\inf_{x\in F} I(x)$,}\\
        (ii)~~~~&\mbox{For all open sets $G$,
        $\liminf_{n}\frac{1}{n}\log\mu_n\pa{G} \geq -\inf_{x\in G} I(x)$.}
    \end{align*}
\end{definition}
The standard, and likely the most well-known, LDP result for DP is as follows.
\begin{theorem}[see \cite{ganesh2000large}]\label{thm:ldp}
$\pa{\DP\pa{\alpha\nu_0}}_\alpha$ satisfies a LDP with speed $\alpha$ and rate function $I(\nu) = \KL{\nu_0}{\nu},$ i.e.,
for all $B\in \cB\pa{\cM_1(\Omega)}$, if $B^{\mathrm{o}}$ (resp. $\bar B$) denotes the interior of $B$ (resp. the closure),
\begin{align*}-\inf_{\nu\in B^{\mathrm{o}}} I(\nu)\leq \liminf_{\alpha}\frac{1}{\alpha}\log\PPs{\DP\pa{\alpha\nu_0}}{B}\leq \limsup_{\alpha}\frac{1}{\alpha}\log\PPs{\DP\pa{\alpha\nu_0}}{B} \leq -\inf_{\nu\in \bar B} I(\nu).\end{align*}
\end{theorem}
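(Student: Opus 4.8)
The plan is to reduce the statement to the Gamma process together with a single application of the contraction principle. Recall the representation $X_\alpha = G_\alpha/G_\alpha(\Omega)$ with $G_\alpha\sim\cG(\alpha\nu_0)$, and set $Z_\alpha\triangleq G_\alpha/\alpha$, viewed as a random element of the space $\cM(\Omega)$ of finite non-negative measures with the weak topology (which on the compact metric space $\Omega$ coincides with the weak-$\ast$ topology induced by the pairing with $\cC(\Omega)$). Because $\Omega$ is compact metric, $\cM_1(\Omega)$ is compact, so for the target statement exponential tightness is automatic and it suffices to establish the upper bound on closed sets and the lower bound on open sets; for $Z_\alpha$ the analogous exponential tightness on the scale of total mass follows from $\E[\exp(\varepsilon G_\alpha(\Omega))] = (1-\varepsilon)^{-\alpha}$ for $\varepsilon\in(0,1)$, combined with the weak compactness of $\sset{\mu:\mu(\Omega)\le M}$.

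\emph{Step 1 (LDP for the Gamma process).} From the Gamma-process MGF \eqref{rel:MGFGP}, for every $f\in\cC(\Omega)$,
\[
\frac1\alpha\log\EEs{G\sim\cG(\alpha\nu_0)}{\exp\pa{\textstyle\int f\,dG}} = -\E_{\nu_0}[\log(1-f)]
\]
when $f\le 1$ and $\nu_0(f^{-1}(\{1\}))=0$, and $=+\infty$ otherwise, so $\Lambda(f)\triangleq\lim_\alpha\tfrac1\alpha\log\E[\exp(\alpha\int f\,dZ_\alpha)] = -\E_{\nu_0}[\log(1-f)]$ with this convention. I would deduce the LDP for $(Z_\alpha)$ on $\cM(\Omega)$ at speed $\alpha$ with rate function $\tilde I=\Lambda^\ast$ (the convex conjugate of $\Lambda$ for the pairing $(\cM(\Omega),\cC(\Omega))$), either via the abstract Gärtner--Ellis/Baldi theorem, or more robustly by first proving the LDP for the finite-dimensional marginals $(G_\alpha(B_1),\dots,G_\alpha(B_k))/\alpha$ — independent $\mathrm{Gamma}(\alpha\nu_0(B_i),1)$ variables, hence covered by Cramér's theorem with rate $\sum_i\pa{z_i-\nu_0(B_i)-\nu_0(B_i)\log(z_i/\nu_0(B_i))}$ — and lifting by the Dawson--Gärtner projective-limit theorem. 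A martingale-convergence argument along refining partitions then identifies
\[
\tilde I(\mu)=\int_\Omega\pa{g-1-\log g}\,d\nu_0+\mu_{\mathrm s}(\Omega),\qquad g\triangleq\frac{d\mu_{\mathrm{ac}}}{d\nu_0},
\]
where $\mu=\mu_{\mathrm{ac}}+\mu_{\mathrm s}$ is the Lebesgue decomposition with respect to $\nu_0$, valid when $g>0$ holds $\nu_0$-a.e., and $\tilde I(\mu)=+\infty$ otherwise; in particular $\tilde I$ is a good rate function and $\tilde I\equiv+\infty$ in a neighbourhood of the zero measure since $\nu_0\not\ll 0$. (Alternatively one may quote the Lynch--Sethuraman LDP for processes with independent increments specialized to the Gamma process.)

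\emph{Step 2 (contraction and computation of the rate).} The normalisation map $\Phi\colon\mu\mapsto\mu/\mu(\Omega)$ is continuous on $\sset{\mu:\mu(\Omega)>0}$, a set that contains the effective domain of $\tilde I$, and $X_\alpha=\Phi(Z_\alpha)$; a (localized) contraction principle then gives that $(\DP(\alpha\nu_0))_\alpha$ satisfies an LDP at speed $\alpha$ with $I(\nu)=\inf\sset{\tilde I(\mu):\Phi(\mu)=\nu}=\inf_{s>0}\tilde I(s\nu)$, the last equality because $\Phi(\mu)=\nu$ forces $\mu=s\nu$ with $s=\mu(\Omega)>0$. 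If $\nu_0\not\ll\nu$ then $\nu_0\not\ll s\nu$ for every $s$, so $I(\nu)=+\infty=\KL{\nu_0}{\nu}$. If $\nu_0\ll\nu$, write $\nu=h\nu_0+\nu_{\mathrm s}$ with $h>0$ $\nu_0$-a.e.; then $d(s\nu)_{\mathrm{ac}}/d\nu_0=sh$, $\int h\,d\nu_0=1-\nu_{\mathrm s}(\Omega)$, and $d\nu_0/d\nu=1/h$ $\nu_0$-a.e., whence
\[
\tilde I(s\nu)=s\int h\,d\nu_0-1-\log s-\int\log h\,d\nu_0+s\nu_{\mathrm s}(\Omega)=(s-1-\log s)+\KL{\nu_0}{\nu}.
\]
Since $s-1-\log s\ge 0$ with equality at $s=1$, this yields $I(\nu)=\KL{\nu_0}{\nu}$. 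Finally, the two displayed inequalities over a general Borel set $B$ are just the standard reformulation of the LDP bounds applied to $\bar B$ and $B^{\mathrm o}$.

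\emph{Main obstacle.} The delicate part is Step 1: the lower bound of the Gamma-process LDP and the identification of $\Lambda^\ast$ with the explicit functional $\tilde I$. Infinite-dimensional Gärtner--Ellis requires essential-smoothness/exposed-point hypotheses that are awkward to verify for $f\mapsto-\E_{\nu_0}[\log(1-f)]$, while the Dawson--Gärtner route moves the work into the martingale-convergence identification of the projective-limit rate function and into correctly accounting for the component of $\mu$ singular with respect to $\nu_0$; one must also justify the contraction through $\Phi$, which is only continuous away from the zero measure, using the goodness of $\tilde I$ and the fact that its effective domain stays bounded away from $0$.
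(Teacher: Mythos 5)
The paper itself gives no proof of this theorem: it is quoted from \cite{ganesh2000large}, and the surrounding discussion only notes two known routes, the Ganesh--O'Connell argument (finite-dimensional Dirichlet marginals plus Varadhan's lemma) and the Lynch--Sethuraman route through the Gamma process. Your proposal is a reconstruction of the second route, and its overall structure is sound: the Cram\'er rate you write for the finite-dimensional Gamma marginals is correct, and your Step~2 is a correct and complete computation, including the fact that $\nu_0\ll\nu$ forces $h>0$ $\nu_0$-a.e., the bookkeeping of the singular part, the identity $\tilde I(s\nu)=(s-1-\log s)+\KL{\nu_0}{\nu}$, and the case $\nu_0\not\ll\nu$. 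Compared with the cited derivation, your route buys an explicit MGF (the Gamma process, Equation~\eqref{rel:MGFGP}) at the price of working on the non-compact space $\cM(\Omega)$, of identifying the projective-limit/G\"artner--Ellis rate $\tilde I$ together with its singular-mass term, and of localizing the contraction through $\Phi(\mu)=\mu/\mu(\Omega)$, which is discontinuous at the zero measure; the cited proofs stay on the compact space $\cM_1(\Omega)$ and avoid all three issues. As you concede, the substance is in Step~1, which your write-up defers either to heavy abstract machinery or to \cite{lynch1987large}; quoting the latter is legitimate here, since the paper does exactly that kind of citation, but without it your Step~1 is a plan rather than a proof.

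One concrete correction: it is not true that $\tilde I\equiv+\infty$ in a neighbourhood of the zero measure, nor that the effective domain of $\tilde I$ stays bounded away from $0$; indeed $\tilde I(\epsilon\nu_0)=\epsilon-1-\log\epsilon<\infty$ for every $\epsilon>0$. What is true, and what the localized contraction actually requires, is weaker and suffices: by Jensen's inequality, $\tilde I(\mu)\ge\mu(\Omega)-1-\log\mu_{\mathrm{ac}}(\Omega)$, so every level set $\sset{\tilde I\le t}$ is bounded away from the zero measure, and $\PP{G_\alpha(\Omega)/\alpha\le\epsilon}$ is exponentially negligible with rate $\epsilon-1-\log\epsilon\to\infty$ as $\epsilon\to0$. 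Together these let you run the contraction principle on $\sset{\mu:\mu(\Omega)\ge\epsilon}$, where $\Phi$ is continuous, and then let $\epsilon\to0$; you should state the argument in this form rather than via the (false) claim about the effective domain.
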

\noindent
The rate function in Theorem~\ref{thm:ldp} is given by the reverse KL divergence $I(\cdot)=\KL{\nu_0}{\cdot}$, which is a dual to the rate function in the Sanov theorem \cite{sanov1958probability}.
 In \cite{ganesh1999inverse}, the reverse relation of the rate functions is explained as follows: ``in Sanov’s theorem we ask how likely the empirical
distribution is to be close to $\nu_0$, given that the true distribution is
$\nu$; whereas in the
Bayesian context we ask how likely it is that the true distribution is close to
$\nu_0$,
given that the empirical distribution is close to
$\nu$.'' There are several ways to get Theorem~\ref{thm:ldp}. For instance, \cite{lynch1987large} utilize an LDP on the Gamma process, linking to DPs through the characterization mentioned above, while \cite{ganesh2000large} rely on Varadhan's integral lemma (see Fact~\ref{fact:varadhan}). 


For some probability distribution $\nu\in \cM_1(\Omega)$, one of the central information-theoretic measures we are considering is defined as an infimum of Kullback-Leibler divergences:
for some real-valued continuous function $f\in \cC(\Omega)$ and some $u\in\R$, we define
\[\Kinf(\nu, u, f) \triangleq \inf_{\mu\in \cM_1(\Omega),~\EEs{\mu}{f}\geq u} \KL{\nu}{\mu},\]
where by convention, the infimum of the empty set equals to $\infty$.
Notice, $\Kinf$ is used to express the bounds of Theorem~\ref{thm:ldp} when $B$ is some deviation event.
This quantity can be interpreted as a distance from the measure $\nu$ to the set of all measures $\mu\in \cM_1(\Omega),~\EEs{\mu}{f}\geq u$, where the distance is measured by the KL-divergence.
The measure $\mu$ solving this optimization problem is called moment projection ($M$-projection) or reversed information projection ($rI$-projection), see \cite{csiszar2003information,bishop2006pattern,murphy2022probabilistic}. 
This is different from the more common information projection ($I$-projection), 
\(
\inf_{\mu\in \cM_1(\Omega),~\EEs{\mu}{f}\geq u} \KL{\mu}{\nu},
\)
 appearing, for example, in Sanov-type deviation bounds \cite{sanov1961probability}.
The $I$-projections have a geometric interpretation because the KL can be viewed as a Bregman divergence. The $M$-projections are not Bregman divergences and lack geometric interpretation. However, they are deeply connected to the maximum likelihood estimation when the measure $\nu$ is the empirical measure of a sample \cite[Lemma 3.1]{csiszar2004information}. Additionally, as we saw in Theorem~\ref{thm:ldp}, $M$-projections naturally appear as a rate function for a LDP in a Bayesian framework \cite{ganesh1999inverse}. They also naturally appear in lower (and sometimes upper) bounds for multi-armed bandits\footnote{We apply our results to this domain in Section~\ref{sec:cts}.} \cite{Lai1985asymptotically,burnetas1996optimal}.
Like the KL divergence, $\Kinf(\nu, u, f)$ admits the following variational formula.
\begin{lemma}[Variational formula for $\Kinf$ \cite{honda2015nonasymptotic,garivier2022klucbswitch}]
    For all $\nu\in \cM_1(\Omega)$, $f\in \cC(\Omega)$, $u\in [f_{\min},f_{\max})$, where $f_{\max}\triangleq \max_{x\in \Omega} f(x)$, $f_{\min}\triangleq \min_{x\in \Omega} f(x)$, we have
    \[\Kinf(\nu, u, f) = \max_{\lambda\in [0,1/(f_{\max}-u)]} \EEs{\nu}{\log\pa{1-\lambda \pa{f-u}}}.\]
    Moreover, if $\lambda^*$ is the value at which the above maximum is reached, then
 \[\EEs{\nu}{1/\pa{1-\lambda^*(f-u)}}\leq 1.\]
 In particular, $\nu\pa{f^{-1}\pa{\sset{f_{\max}}}}=0$ in the case $\lambda^*=1/(f_{\max}-u)$.
 \label{lem:Variational formula for Kinf}
\end{lemma}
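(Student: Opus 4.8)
The plan is to prove the two matching inequalities, abbreviating $g\triangleq f-u$, so that $g_{\max}\triangleq\max g=f_{\max}-u>0$ and $g_{\min}\triangleq\min g\le 0$, and writing $\phi(\lambda)\triangleq\EEs{\nu}{\log(1-\lambda g)}$ for $\lambda\in[0,1/g_{\max}]$. On this interval $1-\lambda g$ lies in $[1-\lambda g_{\max},\,1-\lambda g_{\min}]$, hence is nonnegative and bounded above by the constant $1+|g_{\min}|/g_{\max}$; each map $\lambda\mapsto\log(1-\lambda g(\omega))$ is concave, so $\phi$ is concave, and a reverse-Fatou (dominated $\limsup$) argument using that constant bound shows $\phi$ is upper semicontinuous on the compact interval $[0,1/g_{\max}]$ and therefore attains its maximum at some $\lambda^*$. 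I would also record at the outset that the feasible set $\{\mu:\EEs{\mu}{g}\ge 0\}$ contains measures of the form $(1-t)\nu+t\delta_{x_0}$ with $g(x_0)=g_{\max}$ and $t$ close to $1$; these dominate $\nu$ and have finite reverse KL to $\nu$, so $\Kinf(\nu,u,f)<\infty$ throughout.

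\emph{Lower bound (weak duality).} For any admissible $\lambda$ and any feasible $\mu$ with $\nu\ll\mu$, set $Z\triangleq 1-\lambda\EEs{\mu}{g}\in(0,1]$ and let $\tilde\mu$ be the probability measure with $d\tilde\mu/d\mu=(1-\lambda g)/Z$. Then
\[\KL{\nu}{\mu}=\KL{\nu}{\tilde\mu}-\log Z+\EEs{\nu}{\log(1-\lambda g)}\ \ge\ \EEs{\nu}{\log(1-\lambda g)},\]
since $\KL{\nu}{\tilde\mu}\ge 0$ and $Z\le 1$; the degenerate configurations ($\nu\not\ll\mu$; $Z=0$; $\nu$ charging $g^{-1}(\{g_{\max}\})$ when $\lambda=1/g_{\max}$) all render one side $\pm\infty$, so the inequality is trivial. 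Taking the supremum over feasible $\mu$ and then over $\lambda$ gives $\Kinf(\nu,u,f)\ge\phi(\lambda^*)$.

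\emph{Upper bound and the ``moreover'', main cases.} On $[0,1/g_{\max})$ one may differentiate under the integral to get $\phi'(\lambda)=(1-C_\lambda)/\lambda$, where $C_\lambda\triangleq\EEs{\nu}{1/(1-\lambda g)}$, using the elementary identity $\lambda\,\EEs{\nu}{g/(1-\lambda g)}=C_\lambda-1$. If $\lambda^*=0$, concavity forces $\phi'(0^+)=-\EEs{\nu}{g}\le 0$, i.e.\ $\nu$ is itself feasible, so $\Kinf(\nu,u,f)\le\KL{\nu}{\nu}=0=\phi(0)$ and $C_0=1$. If $\lambda^*\in(0,1/g_{\max})$, then $\phi'(\lambda^*)=0$ yields $C_{\lambda^*}=1$, so $d\mu^*\triangleq(1-\lambda^*g)^{-1}d\nu$ is a probability measure, it is feasible since $\EEs{\mu^*}{g}=\EEs{\nu}{g/(1-\lambda^*g)}=0$, and $\KL{\nu}{\mu^*}=\EEs{\nu}{\log(1-\lambda^*g)}=\phi(\lambda^*)$; hence $\Kinf(\nu,u,f)\le\phi(\lambda^*)$ and $C_{\lambda^*}=1$, which is exactly the ``moreover'' in this case.

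The step I expect to be most delicate is the boundary case $\lambda^*=1/g_{\max}$, where $1-\lambda^*g$ vanishes on $g^{-1}(\{g_{\max}\})$ and the tilted density $(1-\lambda^*g)^{-1}$ need not be $\nu$-integrable, so the direct construction breaks down. Here $\phi(\lambda^*)=\max\phi\ge\phi(0)=0>-\infty$ already forces $\nu\bigl(f^{-1}(\{f_{\max}\})\bigr)=0$, while $\phi'\ge 0$ on $(0,\lambda^*)$ gives $C_\lambda\le 1$ there and hence $\EEs{\nu}{1/(1-\lambda^*g)}\le 1$ by Fatou, establishing both ``moreover'' assertions. To obtain $\Kinf(\nu,u,f)\le\phi(\lambda^*)$ I would approximate: for $\lambda<\lambda^*$ and $\delta\in(0,1)$ set $\mu_{\delta,\lambda}\triangleq(1-\delta)\,C_\lambda^{-1}(1-\lambda g)^{-1}\nu+\delta\,\delta_{x_0}$ with $g(x_0)=g_{\max}$; using $C_\lambda\le 1$ one checks $\mu_{\delta,\lambda}$ is feasible once $\delta$ slightly exceeds $1-\lim_{\lambda\uparrow\lambda^*}C_\lambda$, whereas $\KL{\nu}{\mu_{\delta,\lambda}}=\log\!\bigl(C_\lambda/(1-\delta)\bigr)+\phi(\lambda)$, which tends to $\phi(\lambda^*)$ along a suitable joint limit $\lambda\uparrow\lambda^*$, $\delta\downarrow 1-\lim_{\lambda\uparrow\lambda^*}C_\lambda$. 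Combining this with the lower bound closes the proof.
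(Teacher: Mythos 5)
The paper itself gives no proof of this lemma: it is quoted from the cited references (Honda--Takemura and Garivier et al.), so there is nothing internal to compare against. Your argument is correct and is essentially the standard Lagrangian-duality/exponential-tilting proof from that literature: the weak-duality step $\KL{\nu}{\mu}=\KL{\nu}{\tilde\mu}-\log Z+\EEs{\nu}{\log(1-\lambda g)}\ge \EEs{\nu}{\log(1-\lambda g)}$ is sound (with the degenerate cases correctly dismissed as trivial), the interior and $\lambda^*=0$ cases are handled by the first-order condition and the tilted measure $d\mu^*=(1-\lambda^* g)^{-1}d\nu$, and the delicate boundary case $\lambda^*=1/(f_{\max}-u)$ is treated properly: $\phi(\lambda^*)\ge\phi(0)=0$ forces $\nu(f^{-1}(\{f_{\max}\}))=0$, monotonicity of $\phi$ on $(0,\lambda^*)$ gives $C_\lambda\le 1$ and Fatou yields the ``moreover'' bound, and your mixture $\mu_{\delta,\lambda}$ with a point mass at a maximizer of $f$ recovers $\Kinf\le\phi(\lambda^*)$ in the limit (the joint limit is best phrased as: fix $\delta=1-C_{\lambda^*}+\eta$, let $\lambda\uparrow\lambda^*$ using left-continuity of $\phi$ at $\lambda^*$ from concavity and finiteness, then $\eta\downarrow 0$). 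Two cosmetic points: in the lower bound you should take the \emph{infimum} over feasible $\mu$ (the inequality holds uniformly, so the conclusion is unaffected), and the existence of the maximizer $\lambda^*$, which the statement implicitly asserts, is indeed covered by your upper-semicontinuity-plus-compactness remark, so nothing is missing.
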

\noindent
This formula is essential for deriving the deviation and concentration results involving~$\Kinf$: as a simple example, with the same assumptions and notations as in Lemma~\ref{lem:Variational formula for Kinf}, one can consecutively use Chernoff bound, Equation~\ref{rel:MGFGP} and Lemma~\ref{lem:Variational formula for Kinf} to get
\begin{align}\nonumber\textstyle
    \PPs{X\sim \DP\pa{\alpha \nu_0}}{\EEs{X}{f} \geq u} 
    &\leq\textstyle M_{\cG(\alpha\nu_0)}(\lambda^*\pa{f - u})
    \\&\textstyle= e^{-\alpha \EEs{\nu_0}{\log\pa{1-\lambda^*\pa{f - u}}}} = e^{-\alpha \Kinf(\nu_0, u, f)}.\label{rel:chernoff_(1)_lemma1}
\end{align}


In fact, LDP rate functions, in general, are often used in non-asymptotic concentration inequalities. For example, suppose we have a process consisting of real-valued i.i.d. random variables $(Y_i)$. 
Then, it is known that the sequence $\log\PP{{\sum_{i=1}^m Y_i}/m \geq x}$ is superadditive w.r.t.~$m\in \N^*$ (we recall that a function $f:\R\to\R$ is called superadditive on $A\subset \R$ if $f(x+y)\geq f(x)+f(y)$ for all $x,y\in A$), so from the superadditive lemma due to Fekete \cite{fekete1923verteilung},  for all $n\geq 1$,
\begin{align*}\textstyle\frac{1}{n}\log\PP{\frac{1}{n}{\sum_{i=1}^n Y_i} \geq x} &\leq \textstyle{\sup_{m\geq 1}\frac{1}{m}\log\PP{\frac{1}{m}{\sum_{i=1}^m Y_i} \geq x}} \\&= \textstyle {\lim_{m\to\infty}\frac{1}{m}\log\PP{\frac{1}{m}{\sum_{i=1}^m Y_i} \geq x}},\end{align*}
where the last quantity is the corresponding LDP rate function. 
Our aim in this paper is to use a similar superadditivity approach, but to bound the DP CGF instead.

\section{A bound through superadditivity of the CGF}

In this section, we explore how superadditivity for the DP CGF can translate a limit on the CGF into a bound. Specifically, our focus lies on the limit given by Varadhan's integral lemma \cite{Varadhan1966asymptotic}. A frequently encountered formulation of this lemma is the following.

\begin{fact}[Varadhan’s integral lemma \cite{varadhan1984large}]
\label{fact:varadhan}
Let $\cX$ be a complete separable metric space. Let
$(P_n)\in \cM_1\pa{\cX}^\N$ satisfying an LDP with a rate function $I(\cdot)$ and let $\phi\in \cC(\cX)$. 
Then
\[\lim_{n\to\infty}\frac{1}{n}\log\EEs{P_n}{\exp\pa{n\phi}} = \sup_{x\in \cX}\pa{\phi(x) - I(x)}.\]
\end{fact}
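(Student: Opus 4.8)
\medskip
\noindent\textbf{Proof strategy.}
The plan is to establish the two matching inequalities
\[
\liminf_{n\to\infty}\tfrac1n\log\EEs{P_n}{\exp\pa{n\phi}}\ \ge\ \sup_{x\in\cX}\pa{\phi(x)-I(x)}\ \ge\ \limsup_{n\to\infty}\tfrac1n\log\EEs{P_n}{\exp\pa{n\phi}},
\]
whose combination (together with the trivial $\limsup\ge\liminf$) forces the limit to exist and to equal $\sup_{x\in\cX}\pa{\phi(x)-I(x)}$. For the lower bound I would fix $x\in\cX$ and $\delta>0$, use the continuity of $\phi$ to choose an open set $G\ni x$ on which $\phi>\phi(x)-\delta$, and bound $\EEs{P_n}{\exp\pa{n\phi}}\ge\exp\pa{n(\phi(x)-\delta)}\,P_n(G)$. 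Taking $\tfrac1n\log$, passing to $\liminf_n$, and invoking part~$(ii)$ of the LDP on the open set $G$ gives $\liminf_n\tfrac1n\log\EEs{P_n}{\exp\pa{n\phi}}\ge\phi(x)-\delta-\inf_{y\in G}I(y)\ge\phi(x)-\delta-I(x)$; letting $\delta\downarrow0$ and then taking the supremum over $x$ finishes this direction, which is routine and uses only continuity of $\phi$ and the large-deviation lower bound.

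The upper bound is the substantive part, and I would treat first the case that is actually needed in the sequel, namely $\cX$ compact (in the application $\cX=\cM_1\pa{\Omega}$ is weakly compact, so $I=\KL{\nu_0}{\cdot}$ is automatically a good rate function). Fix $\delta>0$. For each $x\in\cX$, combine the continuity of $\phi$ with the lower semicontinuity of $I$ to pick an open neighbourhood $U_x\ni x$ with $\sup_{U_x}\phi\le\phi(x)+\delta$ and $\inf_{\overline{U_x}}I\ge\pa{I(x)\wedge\delta^{-1}}-\delta$, and by compactness extract a finite subcover $U_{x_1},\dots,U_{x_N}$. Splitting the integral over this cover gives $\EEs{P_n}{\exp\pa{n\phi}}\le\sum_{i=1}^N\exp\pa{n(\phi(x_i)+\delta)}\,P_n\pa{\overline{U_{x_i}}}$; applying $\tfrac1n\log$, using that $\tfrac1n\log\sum_{i=1}^N a^{(n)}_i$ and $\max_i\tfrac1n\log a^{(n)}_i$ have the same $\limsup$ (the factor $N$ contributes $\tfrac1n\log N\to0$), and invoking part~$(i)$ of the LDP on each closed set $\overline{U_{x_i}}$, I obtain $\limsup_n\tfrac1n\log\EEs{P_n}{\exp\pa{n\phi}}\le\max_i\pa{\phi(x_i)-\inf_{\overline{U_{x_i}}}I}+\delta\le\sup_{x}\pa{\phi(x)-I(x)}+2\delta$, and sending $\delta\downarrow0$ closes the compact case.

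The genuine obstacle is the non-compact situation: the finite subcover is lost and the integral may collect mass escaping to infinity, which is exactly why the general Varadhan lemma needs a supplementary tail hypothesis (for instance $\phi$ bounded above, or $\lim_{L\to\infty}\limsup_n\tfrac1n\log\EEs{P_n}{\exp\pa{n\phi}\,\mathbf 1_{\{\phi\ge L\}}}=-\infty$). Granting $I$ good and $\phi\le M$, I would cover the compact level set $\sset{x:I(x)\le R}$ by finitely many $U_{x_i}$ as above, observe that the closed complement $F=\cX\setminus\bigcup_i U_{x_i}$ satisfies $\inf_F I\ge R$, bound $\EEs{P_n}{\exp\pa{n\phi}}\le\sum_i\exp\pa{n(\phi(x_i)+\delta)}P_n\pa{\overline{U_{x_i}}}+\exp(nM)\,P_n(F)$, apply the large-deviation upper bound termwise, and then let $R\to\infty$ so that the extraneous term $M-R$ drops out; an outer truncation of $\phi$ at level $L$ together with the tail hypothesis finally removes the upper bound on $\phi$. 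Since in our setting $\cX$ is compact and $\phi\in\cC\pa{\cX}$ is bounded, this last layer is never actually invoked, so the proof effectively reduces to the lower-bound estimate plus the compact covering argument above.
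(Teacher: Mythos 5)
The paper does not prove this statement at all: it is quoted as a known Fact with a citation to \cite{varadhan1984large}, and only the compact case (where $\cX=\cM_1\pa{\Omega}$ with $\Omega$ compact, so $\phi$ is bounded) is ever used. Your argument is a correct, self-contained proof of exactly that case, and it follows the classical route: the lower bound from the LDP lower bound on a small open neighbourhood where $\phi>\phi(x)-\delta$, and the upper bound from a finite cover by neighbourhoods $U_x$ on which $\phi\le\phi(x)+\delta$ and $\inf_{\overline{U_x}}I\ge\pa{I(x)\wedge\delta^{-1}}-\delta$ (take $U_x$ a ball of half the radius on which continuity and lower semicontinuity give the estimates, so the closure is controlled), together with $\frac1n\log N\to 0$ and the LDP upper bound on each $\overline{U_{x_i}}$. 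One small compression: your displayed inequality $\max_i\pa{\phi(x_i)-\inf_{\overline{U_{x_i}}}I}+\delta\le\sup_x\pa{\phi(x)-I(x)}+2\delta$ is not literally valid for fixed $\delta$ when $I(x_i)=\infty$, where the truncation only gives $\phi(x_i)-\delta^{-1}+2\delta$; the conclusion still follows because this extra term tends to $-\infty$ as $\delta\downarrow 0$ ($\phi$ being bounded on compact $\cX$), so you should carry the maximum of the two cases before taking the limit. Your closing remark is also apt and worth keeping: as stated for a general complete separable $\cX$ and unbounded $\phi\in\cC(\cX)$, Varadhan's lemma requires a supplementary tail or moment hypothesis (goodness of $I$ plus $\phi$ bounded above, or a uniform exponential tail condition), and the Fact is harmless here only because the application takes $\cX$ compact.
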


We can use this fact with the continuous mapping $\phi \colon \nu\in\cX\triangleq\cM_1\pa{\Omega} \mapsto \EEs{\nu}{f} $, where~$f\in\cC(\Omega)$. Indeed, since $\Omega$ is compact, $\cX=\cM_1\pa{\Omega}$ is
compact in the weak topology; additionally, it is separable and metrizable, for example, by the Lévy-Prokhorov metric \cite{prokhorov1956convergence}. We thus get the following result for DPs, using Theorem~\ref{thm:ldp} with $I(\cdot)=\KL{\nu_0}{\cdot}$ to get the needed LDP result.
\begin{corollary}\label{cor:varadhanDP}
    Let $f\in\cC(\Omega)$. Then, 
    \[
    \lim_{\alpha\to\infty} \frac{1}{\alpha}\log 
    \EEs{X\sim\DP\pa{\alpha\nu_0}}{\exp\pa{ \EEs{X}{\alpha f}}}=    \sup_{\nu\in \cM_1(\Omega)}\pa{\EEs{\nu}{f} - \KL{\nu_0}{\nu}}.\]
\end{corollary}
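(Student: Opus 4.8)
The plan is to apply Varadhan's integral lemma (Fact~\ref{fact:varadhan}) directly, taking the metric space $\cX$ to be $\cM_1(\Omega)$ with the weak topology, the index $n$ to be the scale parameter $\alpha$, the measures to be $P_\alpha = \DP\pa{\alpha\nu_0}\in\cM_1\pa{\cM_1(\Omega)}$, and the continuous functional to be $\phi\colon\nu\mapsto\EEs{\nu}{f}$. Once the hypotheses are checked, the conclusion
\[\lim_{\alpha\to\infty}\frac1\alpha\log\EEs{P_\alpha}{\exp\pa{\alpha\phi}}=\sup_{\nu\in\cM_1(\Omega)}\pa{\phi(\nu)-I(\nu)}\]
is exactly the claimed identity, since $\exp\pa{\alpha\phi(X)}=\exp\pa{\EEs{X}{\alpha f}}$ and, by Theorem~\ref{thm:ldp}, $I(\cdot)=\KL{\nu_0}{\cdot}$.

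First I would verify the topological prerequisites. Because $\Omega$ is a compact metric space, $\cM_1(\Omega)$ is compact in the weak topology and metrizable, e.g.\ by the Lévy–Prokhorov metric \cite{prokhorov1956convergence}; a compact metric space is complete and separable, so $\cX=\cM_1(\Omega)$ is an admissible Polish space for Fact~\ref{fact:varadhan}. Next, the map $\phi(\nu)=\int f\,d\nu$ is continuous on $\cM_1(\Omega)$ for $f\in\cC(\Omega)$ — this is precisely the definition of weak convergence — and it is bounded, $|\phi(\nu)|\leq\max_{x\in\Omega}|f(x)|<\infty$ by compactness of $\Omega$, so the integrability/tail condition underlying Varadhan's lemma is trivially satisfied. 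Finally, Theorem~\ref{thm:ldp} supplies the required LDP for $\pa{\DP\pa{\alpha\nu_0}}_\alpha$ with speed $\alpha$ and rate function $\KL{\nu_0}{\cdot}$ on $\cM_1(\Omega)$; this rate function is good, its level sets being closed subsets of the compact space $\cM_1(\Omega)$.

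The only point needing a brief remark is that Fact~\ref{fact:varadhan} is phrased for a sequence indexed by $n\in\N$, whereas here $\alpha$ ranges over $(0,\infty)$. This is not a genuine obstacle: the LDP bounds in Theorem~\ref{thm:ldp} and the Laplace-type argument behind Varadhan's lemma hold verbatim for the continuous parameter $\alpha\to\infty$, and if one prefers one may reduce to arbitrary countable sequences $\alpha_k\to\infty$ and observe that the limit does not depend on the chosen sequence. With these routine checks in place the corollary follows immediately; I do not expect any genuinely hard step, as all the substance is carried by Theorem~\ref{thm:ldp} and Fact~\ref{fact:varadhan}.
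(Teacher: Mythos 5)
Your proposal is correct and takes essentially the same route as the paper: the paper likewise obtains the corollary by applying Fact~\ref{fact:varadhan} with $\cX=\cM_1(\Omega)$ (compact, separable, metrizable by the L\'evy--Prokhorov metric since $\Omega$ is compact), the continuous map $\phi(\nu)=\EEs{\nu}{f}$, and the LDP of Theorem~\ref{thm:ldp} with rate $\KL{\nu_0}{\cdot}$. Your additional remarks on the boundedness of $\phi$ and on passing from the integer-indexed statement to the continuous parameter $\alpha$ merely make explicit routine checks that the paper leaves implicit.
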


Our next step involves demonstrating the following Lemma~\ref{lem:superadditivedir}, with the aim of converting the preceding limit into an upper bound. 

\begin{lemma}[Superadditivity for the DP cumulant-generating function]\label{lem:superadditivedir}
Let $f\in \cC\pa{\Omega}$. Then, the function $\alpha\mapsto\log\EEs{X\sim \DP\pa{\alpha\nu_0}}{\exp\pa{\EEs{X}{\alpha f}}}$ is superadditive on $(0,\infty)$.
\end{lemma}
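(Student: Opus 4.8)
The plan is to exploit the classical \emph{aggregation} (or \emph{fusion}) property of Dirichlet processes: if $X_1\sim\DP\pa{\alpha_1\nu_0}$, $X_2\sim\DP\pa{\alpha_2\nu_0}$, and $B\sim\mathrm{Beta}\pa{\alpha_1,\alpha_2}$ are mutually independent, then $BX_1+(1-B)X_2\sim\DP\pa{(\alpha_1+\alpha_2)\nu_0}$. I would establish this from the Gamma-process representation recalled in the introduction: taking $G_i\sim\cG(\alpha_i\nu_0)$ independent, the superposition $G_1+G_2$ is again a Gamma process with shape $(\alpha_1+\alpha_2)\nu_0$ (the underlying Poisson mean measures add), so its normalization $(G_1+G_2)/(G_1+G_2)(\Omega)$ is distributed as $\DP\pa{(\alpha_1+\alpha_2)\nu_0}$; since each $X_i=G_i/G_i(\Omega)$ is independent of its total mass $G_i(\Omega)\sim\mathrm{Gamma}(\alpha_i,1)$, the weight $B\triangleq G_1(\Omega)/\pa{G_1(\Omega)+G_2(\Omega)}$ is $\mathrm{Beta}(\alpha_1,\alpha_2)$-distributed and independent of the pair $(X_1,X_2)$, while $X_1\perp X_2$; finally $(G_1+G_2)/(G_1(\Omega)+G_2(\Omega))=BX_1+(1-B)X_2$. (Alternatively one may simply cite Ferguson.)

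Granting this, fix $f\in\cC\pa{\Omega}$ and $\alpha_1,\alpha_2>0$, and set $g(\alpha)\triangleq\log\EEs{X\sim\DP\pa{\alpha\nu_0}}{\exp\pa{\EEs{X}{\alpha f}}}$, which is finite since $\EEs{X}{f}\in[f_{\min},f_{\max}]$ is bounded. Writing $U\triangleq\EEs{X_1}{f}$, $V\triangleq\EEs{X_2}{f}$ and using linearity $\EEs{BX_1+(1-B)X_2}{f}=BU+(1-B)V$ together with the aggregation property,
\[
e^{g(\alpha_1+\alpha_2)}=\EEs{}{\exp\pa{(\alpha_1+\alpha_2)\pa{BU+(1-B)V}}},
\]
the expectation being over $U,V,B$ with $(U,V)\perp B$ and $U\perp V$. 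Conditioning on $B$ and factorizing the conditional expectation by independence of $U$ and $V$ yields
\[
e^{g(\alpha_1+\alpha_2)}=\EEs{B}{M_1\pa{(\alpha_1+\alpha_2)B}\,M_2\pa{(\alpha_1+\alpha_2)(1-B)}},
\]
where $M_1(t)\triangleq\EEs{}{e^{tU}}$ and $M_2(t)\triangleq\EEs{}{e^{tV}}$ are the relevant moment-generating functions; note $M_i(\alpha_i)=e^{g(\alpha_i)}$.

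The last step is a convexity argument. Each $M_i$ is a moment-generating function, hence log-convex (by Hölder's inequality), so $b\mapsto\log M_1\pa{(\alpha_1+\alpha_2)b}$ and $b\mapsto\log M_2\pa{(\alpha_1+\alpha_2)(1-b)}$ are convex on $[0,1]$, and therefore so is their sum $\psi(b)\triangleq\log M_1\pa{(\alpha_1+\alpha_2)b}+\log M_2\pa{(\alpha_1+\alpha_2)(1-b)}$. Since $\EEs{}{B}=\alpha_1/(\alpha_1+\alpha_2)$, Jensen's inequality gives
\[
e^{g(\alpha_1+\alpha_2)}=\EEs{B}{e^{\psi(B)}}\ \geq\ e^{\psi(\EEs{}{B})}=M_1(\alpha_1)\,M_2(\alpha_2)=e^{g(\alpha_1)}\,e^{g(\alpha_2)},
\]
i.e. $g(\alpha_1+\alpha_2)\geq g(\alpha_1)+g(\alpha_2)$, which is the claim. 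I expect the main obstacle to be the first step: making the aggregation property fully rigorous (independence of a normalized Dirichlet process from its total mass, identification of the Beta weight and the resulting $\DP$ law of the mixture) and checking the measurability of the maps $\nu\mapsto\EEs{\nu}{f}$, $(\omega,b)\mapsto BX_1+(1-B)X_2$, etc., so that the conditioning and Fubini steps are legitimate; once that is settled, the remainder is just Jensen's inequality and log-convexity of moment-generating functions.
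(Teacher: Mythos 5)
Your proof is correct, but it takes a genuinely different route from the paper's. You derive superadditivity from the aggregation property of Dirichlet processes: with $X_1\sim\DP\pa{\alpha_1\nu_0}$, $X_2\sim\DP\pa{\alpha_2\nu_0}$ and an independent $B\sim\mathrm{Beta}(\alpha_1,\alpha_2)$, the mixture $BX_1+(1-B)X_2\sim\DP\pa{(\alpha_1+\alpha_2)\nu_0}$, which you justify via superposition of the underlying Gamma processes and the independence of a normalized Gamma process from its total mass; conditioning on $B$, factorizing by independence, and then combining log-convexity of moment-generating functions with Jensen's inequality at $\EEs{}{B}=\alpha_1/(\alpha_1+\alpha_2)$ gives exactly $g(\alpha_1+\alpha_2)\geq g(\alpha_1)+g(\alpha_2)$. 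All the steps check out: the Beta weight is indeed independent of the pair $(X_1,X_2)$, the conditional MGFs evaluate to $M_i(\alpha_i)=e^{g(\alpha_i)}$ at the mean of $B$, and the map $b\mapsto M_1\pa{(\alpha_1+\alpha_2)b}M_2\pa{(\alpha_1+\alpha_2)(1-b)}$ is convex as the exponential of a convex function, so Jensen goes in the right direction. The paper instead proceeds combinatorially: it reduces the CGF inequality to moment inequalities of every order $k$, randomizes $f$ into indicator functions of nested level sets via auxiliary uniform variables, computes the mixed moments $\EEs{X\sim\DP(\alpha\nu_0)}{\prod_\ell X(A_\ell)}$ in closed form for increasing sets (using neutrality of the Dirichlet distribution), and concludes by an induction driven by a scalar concavity maximization (Proposition~\ref{prop:easyconcave}). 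Your argument is shorter and more structural, making the Beta-mixture decomposition of $\DP\pa{(\alpha_1+\alpha_2)\nu_0}$ the visible source of superadditivity, at the price of importing the Gamma-process machinery (superposition plus the Lukacs-type normalization independence), which the paper's proof deliberately avoids by staying within finite-dimensional Dirichlet moment computations; the paper's route is more elementary and self-contained but considerably more technical.
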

\noindent
The proof of Lemma~\ref{lem:superadditivedir} is postponed to section~\ref{sec:main_proof}. This in turn leads to the following Theorem~\ref{th:mgfbounddir}. 
\begin{theorem}
[Bound on the cumulant-generating function] Let $f\in \cC(\Omega)$.
Then,
$$ \log M_{\DP\pa{\alpha\nu_0}}(f) = \log\EEs{X\sim \DP\pa{\alpha\nu_0}}{\exp\pa{\EEs{X}{f}}}\leq 
\sup_{\nu\in \cM_1\pa{\Omega}}\pa{\EEs{\nu}{f} - \alpha\KL{\nu_0}{\nu}}.$$
\label{th:mgfbounddir}
\end{theorem}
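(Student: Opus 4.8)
The plan is to \emph{freeze} the asymptotic identity of Corollary~\ref{cor:varadhanDP} into a bound valid at every finite scale, using the superadditivity of Lemma~\ref{lem:superadditivedir} together with Fekete's superadditive lemma, and then to remove the scale from the test function by a one-line rescaling. Fix $f\in\cC(\Omega)$ and set $g(\alpha)\triangleq\log\EEs{X\sim\DP\pa{\alpha\nu_0}}{\exp\pa{\EEs{X}{\alpha f}}}$. Since $\Omega$ is compact, $f$ is bounded, so $\EEs{X}{\alpha f}$ lies in $[\alpha f_{\min},\alpha f_{\max}]$ for every $X\in\cM_1(\Omega)$, whence $g(\alpha)\in\R$ for all $\alpha>0$; by Lemma~\ref{lem:superadditivedir}, $g$ is superadditive on $(0,\infty)$.

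First I would iterate superadditivity: for any fixed $\alpha>0$ and integer $n\ge 1$, $g(n\alpha)\ge n\,g(\alpha)$, so that $g(\alpha)/\alpha\le g(n\alpha)/(n\alpha)$ — the elementary monotonicity underlying Fekete's lemma. Letting $n\to\infty$, the right-hand side converges to the limit supplied by Corollary~\ref{cor:varadhanDP}; that limit is taken as $\alpha\to\infty$ over $(0,\infty)$, hence in particular holds along the subsequence $(n\alpha)_{n\ge 1}$, and equals $\sup_{\nu\in\cM_1(\Omega)}\pa{\EEs{\nu}{f}-\KL{\nu_0}{\nu}}$. We therefore obtain, for every $\alpha>0$,
\[
g(\alpha)\;\le\;\alpha\sup_{\nu\in\cM_1(\Omega)}\pa{\EEs{\nu}{f}-\KL{\nu_0}{\nu}}\;=\;\sup_{\nu\in\cM_1(\Omega)}\pa{\EEs{\nu}{\alpha f}-\alpha\KL{\nu_0}{\nu}}.
\]

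To conclude, I would apply this inequality with $\alpha$ fixed and with $f$ replaced by $f/\alpha\in\cC(\Omega)$: the left-hand side becomes $\log\EEs{X\sim\DP\pa{\alpha\nu_0}}{\exp\pa{\EEs{X}{f}}}=\log M_{\DP\pa{\alpha\nu_0}}(f)$, while the right-hand side becomes $\sup_{\nu\in\cM_1(\Omega)}\pa{\EEs{\nu}{f}-\alpha\KL{\nu_0}{\nu}}$, which is exactly the asserted bound.

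The argument is genuinely short: all the substance is carried by Lemma~\ref{lem:superadditivedir} (the superadditivity, proved in Section~\ref{sec:main_proof}) and by Theorem~\ref{thm:ldp}/Corollary~\ref{cor:varadhanDP} (the LDP combined with Varadhan's integral lemma). The only points requiring a little care are checking that $g$ is finite and that superadditivity may be iterated — both immediate from boundedness of $f$ on the compact space $\Omega$ — and noting that the $\alpha\to\infty$ limit in Corollary~\ref{cor:varadhanDP} may legitimately be read off along the arithmetic subsequence $(n\alpha)_n$, which is automatic because the limit exists over all of $(0,\infty)$. There is no real obstacle; the conceptual point is merely that superadditivity turns the asymptotically optimal identity of Corollary~\ref{cor:varadhanDP} into an inequality already valid at every finite $\alpha$, and that the rescaling $f\mapsto f/\alpha$ converts the form $\EEs{X}{\alpha f}$ appearing in the superadditive sequence into the form $\EEs{X}{f}$ appearing in the CGF.
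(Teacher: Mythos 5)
Your proposal is correct and follows essentially the same route as the paper: superadditivity (Lemma~\ref{lem:superadditivedir}) combined with a Fekete-type argument to dominate $g(\alpha)/\alpha$ by the Varadhan limit of Corollary~\ref{cor:varadhanDP}, followed by the rescaling $f\mapsto f/\alpha$. The only (harmless) difference is that you replace the full statement of Fekete's lemma by its one-line ingredient $g(n\alpha)\ge n\,g(\alpha)$ together with the existence of the limit, which is a slightly more self-contained way of reaching the same inequality.
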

\begin{proof}
The proof is a simple use of the Fekete's lemma \cite{fekete1923verteilung} coupled with Corollary~\ref{cor:varadhanDP}. More precisely,  Lemma~\ref{lem:superadditivedir} and the Fekete's lemma give
\[\lim_{\alpha\to\infty}\frac{1}{\alpha}\log\EEs{X\sim \DP\pa{\alpha\nu_0}}{\exp\pa{\EEs{X}{\alpha f}}} = \sup_{\alpha>0}\frac{1}{\alpha}\log\EEs{X\sim \DP\pa{\alpha\nu_0}}{\exp\pa{\EEs{X}{\alpha f}}},\]
so from Corollary~\ref{cor:varadhanDP}, for any $\alpha>0$,
\[\sup_{\nu\in \cM_1(\Omega)}\pa{\EEs{\nu}{f} - \KL{\nu_0}{\nu}} \geq \frac{1}{\alpha}\log\EEs{X\sim \DP\pa{\alpha\nu_0}}{\exp\pa{\EEs{X}{\alpha f}}}.\]
Multiplying this inequality by $\alpha$ and dividing $f$ by $\alpha$, we get the desired result.
\end{proof}
\begin{remark}
To the best of our knowledge, this bound is  new even for the special case of $X\sim\mathrm{Beta}(a,b)$. In this case, we have for $\lambda \geq 0$,  
\begin{equation}\label{eq:cgf_beta_bound}
    \psi(\lambda) \triangleq \log\EE{e^{\lambda (X-\EE{X})}}\leq \max_{s\in[a/(a+b),1]}\left( \lambda \pa{s-\frac{a}{a+b}} - (a+b)\kl{\frac{a}{a+b}}{s}\right)\,,
\end{equation}
where $\kl{p}{q}\triangleq p\log(\frac{p}{q})+(1-p)\log(\frac{1-p}{1-q}).$ The maximizer in \eqref{eq:cgf_beta_bound} is given by $s= \frac{\lambda - (a+b)+\sqrt{{\pa{\lambda - (a+b)}^2 + 4\lambda a}}}{2\lambda}$. 
Order-reversing property of the convex conjugate $\psi^*$ of $\psi$ implies that  
\[
\psi^*(\varepsilon) \geq (a+b)\kl{\frac{a}{a+b}}{\frac{a}{a+b} + \varepsilon}\,,
\]
and the Cramer method allows us to reproduce the proof for the tail probability bounds for the Beta distribution in terms of KL-divergence from \cite{dumbgen1998new} (see Remark~\ref{rk:sion_} for the generalization to DPs). However, let us stress that  our result is more general since it controls CGF, thus yielding similar inequalities for sums of independent Beta random variables (see Corollary~\ref{cor:indepDPs} for the generalization to DPs), where the Gamma distribution-based techniques of \cite{dumbgen1998new} becomes inapplicable.

\end{remark}
\begin{remark}
    In Theorem~\ref{th:mgfbounddir}, the bound is the convex conjugate of some reversed KL-divergence (as in Theorem~\ref{thm:ldp}). Since the KL is not symmetric, this is different from the KL used in the Donsker and Varadhan's formula \cite{donsker1983asymptotic} or in the original Sanov theorem \cite{sanov1958probability}.
\end{remark}

\begin{remark}
\label{rk:sion_}
 Theorem~\ref{th:mgfbounddir} can be used to recover \eqref{rel:chernoff_(1)_lemma1}, using Sion's theorem ($\cM_1\pa{\Omega}$ is a compact convex subset of the topological vector space of finite signed measures on $\Omega$): 
\begin{align*}\nonumber\textstyle
    \PPs{X\sim \DP\pa{\alpha \nu_0}}{\EEs{X}{f} \geq u} 
    &\leq\textstyle \inf_{\lambda \geq 0}M_{\DP\pa{\alpha \nu_0}}\pa{\lambda(f-u)}
    \\&\textstyle \leq 
    \exp\pa{ \inf_{\lambda \geq 0}\sup_{\nu\in \cM_1\pa{\Omega}}\pa{\lambda \EEs{\nu}{f-u} - \alpha\KL{\nu_0}{\nu}}}
    \\&\textstyle =
    \exp\pa{\sup_{\nu\in \cM_1\pa{\Omega}}\inf_{\lambda \geq 0}\pa{\lambda \EEs{\nu}{f-u} - \alpha\KL{\nu_0}{\nu}}}
    \\&\textstyle = e^{-\alpha \Kinf(\nu_0, u, f)}\,,
\end{align*}
letting $\lambda$ go to $0$ (resp. $\infty$) when $\EEs{\nu}{f-u}\geq 0$ (resp. $\EEs{\nu}{f-u}<0$) in the last equality.
\label{rk:sion_for_DP}
\end{remark}
We can go beyond Remark~\ref{rk:sion_for_DP} with the following result for the sum of independent DPs. Notably, the direct use of the multiplicative behavior of MGF allows us to bypass the need for the representation of DPs via Gamma processes.
\begin{corollary}[Confidence region for independent DPs]\label{cor:indepDPs}Consider $r\in \N^*$, $f_1,\dots, f_r\in \cC\pa{\Omega}$ and $\alpha_1\nu_1,\dots,\alpha_r\nu_r \in \cM\pa{\Omega}$.
    For $\delta \in (0,1)$, let $$\textstyle M_{\delta}\triangleq\sset{(\mu_j)_{j\in [r]}\in \pa{\cM_1\pa{\Omega}}^r,~\sum_{j=1}^r \alpha_j \KL{\nu_j}{\mu_j}\leq \log(1/\delta)}.$$
    Then,
    \[\textstyle\PPs{(X_j)\sim\otimes_{j\in [r]}\DP\pa{\alpha_j \nu_j}
    }{\sum_{j=1}^r
    \EEs{X_j}{f_j} > \sup_{(\mu_j)\in M_{\delta}} \sum_{j=1}^r\EEs{\mu_j}{f_j}}\leq \delta.\]
In addition, if $u\in \R$, then,
\[\PPs{(X_j)\sim\otimes_{j\in [r]}\DP\pa{\alpha_j \nu_j}
    }{\sum_{j=1}^r
    \EEs{X_j}{f_j}\geq u} \leq
\exp\pa{-\inf_{\substack{{(u_j)\in \R^r,} \\ {\sum_{j\in [r]} u_j = u}}}
 \sum_{j=1}^r \alpha_j\Kinf(\nu_j,u_j,f_j)}.\]
\end{corollary}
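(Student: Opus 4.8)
The plan is to combine the multiplicative property of MGFs with the single-DP bound of Theorem~\ref{th:mgfbounddir}, and then optimize. First I would treat the function $f \triangleq (f_1,\dots,f_r)$ acting on the product space, writing
\[
\EEs{(X_j)\sim\otimes_j\DP(\alpha_j\nu_j)}{\exp\pa{\textstyle\sum_{j=1}^r \EEs{X_j}{\lambda f_j}}} = \prod_{j=1}^r M_{\DP(\alpha_j\nu_j)}(\lambda f_j)
\]
by independence, for any $\lambda \ge 0$. Applying Theorem~\ref{th:mgfbounddir} to each factor (the measures $\alpha_j\nu_j$ need not be probability measures, but one can absorb the mass: write $\alpha_j\nu_j = \tilde\alpha_j\tilde\nu_j$ with $\tilde\nu_j\in\cM_1(\Omega)$ and $\tilde\alpha_j = (\alpha_j\nu_j)(\Omega)$, or simply note the statement and proof of Theorem~\ref{th:mgfbounddir} go through verbatim with $\alpha\nu_0$ a general finite measure since $\KL{\alpha\nu_0}{\cdot}$ is what appears) gives
\[
\log \EEs{(X_j)}{\exp\pa{\textstyle\sum_j \EEs{X_j}{\lambda f_j}}} \le \sum_{j=1}^r \sup_{\mu_j\in\cM_1(\Omega)}\pa{\lambda\EEs{\mu_j}{f_j} - \alpha_j\KL{\nu_j}{\mu_j}}.
\]

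For the first (confidence-region) claim I would take $\lambda = 1$ and apply a Chernoff/Markov step: with $S \triangleq \sum_j\EEs{X_j}{f_j}$ and $t \triangleq \sup_{(\mu_j)\in M_\delta}\sum_j\EEs{\mu_j}{f_j}$, we have $\PP{S > t} \le e^{-t}\EE{e^S} \le \exp\pa{-t + \sum_j\sup_{\mu_j}(\EEs{\mu_j}{f_j}-\alpha_j\KL{\nu_j}{\mu_j})}$. The key algebraic point is that
\[
\sum_{j=1}^r \sup_{\mu_j}\pa{\EEs{\mu_j}{f_j} - \alpha_j\KL{\nu_j}{\mu_j}} = \sup_{(\mu_j)\in(\cM_1(\Omega))^r}\pa{\textstyle\sum_j\EEs{\mu_j}{f_j} - \sum_j\alpha_j\KL{\nu_j}{\mu_j}},
\]
since the objective separates over coordinates. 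Splitting this supremum according to the value $c \triangleq \sum_j\alpha_j\KL{\nu_j}{\mu_j}$ and using that on $\{c \le \log(1/\delta)\}$ the term $\sum_j\EEs{\mu_j}{f_j}$ is at most $t$ (by definition of $t$ and $M_\delta$), while on $\{c > \log(1/\delta)\}$ the Lagrangian trick — or directly bounding $\sum_j\EEs{\mu_j}{f_j} \le t + (c - \log(1/\delta))$ via a scaling/convexity argument along the segment from $\nu$-type configurations — shows $\sup(\sum_j\EEs{\mu_j}{f_j} - c) \le t - \log(1/\delta)$. Hence $\PP{S>t}\le \exp(-t + t - \log(1/\delta)) = \delta$. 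I would make the "$\sum\EEs{\mu_j}{f_j} \le t + (c-\log(1/\delta))$" step precise by noting $g(c) \triangleq \sup_{\sum_j\alpha_j\KL{\nu_j}{\mu_j}\le c}\sum_j\EEs{\mu_j}{f_j}$ is concave and nondecreasing in $c$ (it is the value function of a convex program), so $g(c) - c$ attains its sup and, combined with $g(\log(1/\delta)) \le t$, the desired slope bound follows; alternatively one simply observes $\sup_c(g(c)-c) = \sup_c(g(c) - \log(1/\delta) - (c-\log(1/\delta))) \le g(\log 1/\delta)-\log(1/\delta) \le t - \log(1/\delta)$ using concavity of $g$ and that its supremum over the affine shift is controlled at $c=\log(1/\delta)$ — this is where care is needed.

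For the second claim I would instead keep $\lambda$ free and optimize. By Chernoff, for any $\lambda\ge 0$,
\[
\PP{\textstyle\sum_j\EEs{X_j}{f_j}\ge u} \le \exp\pa{-\lambda u + \textstyle\sum_j\sup_{\mu_j}(\lambda\EEs{\mu_j}{f_j}-\alpha_j\KL{\nu_j}{\mu_j})},
\]
and then take the infimum over $\lambda\ge 0$. Exactly as in Remark~\ref{rk:sion_for_DP}, I would use Sion's minimax theorem on each coordinate (or on the product, which is still a compact convex set of measures) to swap $\inf_\lambda$ and $\sup_{(\mu_j)}$; writing $u = \sum_j u_j$ as a free splitting and recognizing $\inf_{\lambda\ge 0}\sum_j(\lambda\EEs{\mu_j}{f_j} - \lambda u_j)$ forces $\EEs{\mu_j}{f_j}\ge u_j$ for the relevant $\mu_j$ (letting $\lambda\to 0$ or $\infty$ as in the cited remark), one lands on $-\inf_{\sum u_j = u}\sum_j\inf_{\EEs{\mu_j}{f_j}\ge u_j}\alpha_j\KL{\nu_j}{\mu_j} = -\inf_{\sum u_j = u}\sum_j\alpha_j\Kinf(\nu_j,u_j,f_j)$, which is the claimed exponent. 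The main obstacle is bookkeeping: justifying the minimax swap (compactness/convexity of $(\cM_1(\Omega))^r$ in the weak topology, lower semicontinuity and convexity of $\mu\mapsto\KL{\nu_j}{\mu}$, linearity in $\lambda$) and handling the free decomposition $u=\sum u_j$ cleanly, including the edge cases where some $\Kinf(\nu_j,u_j,f_j)=\infty$ (i.e.\ $u_j > (f_j)_{\max}$), in which case both sides are consistent. Everything else is the separability of the supremum and the elementary Chernoff bound.
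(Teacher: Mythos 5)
Your second inequality is fine and is essentially the paper's own argument: Chernoff with a free parameter $\lambda$, the product structure of the MGF over independent coordinates, Theorem~\ref{th:mgfbounddir} applied coordinatewise, Sion's minimax theorem on the compact convex set $(\cM_1(\Omega))^r$ to swap $\inf_\lambda$ and $\sup_{(\mu_j)}$, and then reading off the splitting $u=\sum_j u_j$ to land on $\sum_j\alpha_j\Kinf(\nu_j,u_j,f_j)$.

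The first inequality, however, has a genuine gap, and it sits exactly where you flagged ``this is where care is needed.'' Fixing the Chernoff exponent at $\lambda=1$ reduces the claim to
\[
\sup_{(\mu_j)\in(\cM_1(\Omega))^r}\Bigl(\sum_j\EEs{\mu_j}{f_j}-\sum_j\alpha_j\KL{\nu_j}{\mu_j}\Bigr)\;\le\;t-\log(1/\delta),
\]
i.e., with $g(c)\triangleq\sup\{\sum_j\EEs{\mu_j}{f_j}:\sum_j\alpha_j\KL{\nu_j}{\mu_j}\le c\}$, to the identity $\sup_{c\ge 0}\bigl(g(c)-c\bigr)=g(\log(1/\delta))-\log(1/\delta)$. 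Concavity and monotonicity of $g$ do not give this: $g(c)-c$ is maximized where $1$ is a supergradient of $g$, and nothing forces that point to be $c=\log(1/\delta)$. Your intermediate claim $\sum_j\EEs{\mu_j}{f_j}\le t+(c-\log(1/\delta))$ asserts that the slope of $g$ beyond $\log(1/\delta)$ is at most $1$, which is false in general — replace the $f_j$ by $Cf_j$ for a large constant $C$: $g$ and all its slopes scale by $C$ while $\log(1/\delta)$ does not, and the $\lambda=1$ Chernoff bound becomes vacuous (larger than $1$) even though the stated confidence-region inequality remains true (it is invariant under this rescaling). Note also that on the region $c\le\log(1/\delta)$ your bound only yields $g(c)-c\le t-c\le t$, not $t-\log(1/\delta)$. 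The cure is precisely what the paper does: write $t=\sup_{(\mu_j)\in M_\delta}\sum_j\EEs{\mu_j}{f_j}=\min_{\lambda\ge0}g(\lambda)=g(\lambda^*)$ by strong duality (Slater's condition), dispose of the degenerate case $\lambda^*=0$ separately (the strict-inequality event is then empty), and run Chernoff with the problem-dependent exponent $1/\lambda^*$; with that choice the bound from Theorem~\ref{th:mgfbounddir} collapses exactly to $\delta$. In short: for the confidence-region statement the Chernoff parameter must be tied to the dual multiplier of the constrained problem defining $M_\delta$ — it cannot be fixed at $1$.
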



\begin{remark} The bound obtained in the second inequality of Corollary~\ref{cor:indepDPs} matches the LDP rate function obtained using the joint LDP \cite{chaganty1997large} and the contraction principle \cite{dembo2009large}. However, our bound is non-asymptotic.
\end{remark}

\begin{proof}[Proof of Corollary~\ref{cor:indepDPs}]
    Let's start by proving the first inequality. From strong duality via Slater condition, we have
$$
\sup_{\pa{\mu_j}\in M_{\delta}} \sum_{j=1}^r\EEs{\mu_j}{f_j}
=
\min_{\lambda\geq 0}
g(\lambda) = g(\lambda^*),
$$
where
$$ g(\lambda)\triangleq
\sup_{\pa{\mu_j}\in \cM_1\pa{\Omega}^r}{ \lambda \pa{\log(1/\delta) - \sum_{j=1}^r \alpha_j\KL{\nu_j}{\mu_j}}}
+
\sum_{j=1}^r\EEs{\mu_j}{f_j}. 
$$
If $\lambda^* = 0$, then the result is trivial as 
$$\PPs{(X_j)\sim\otimes_{j\in [r]}\DP\pa{\alpha_j \nu_j}
    }{\sum_{j=1}^r
    \EEs{X_j}{f_j} > \sup_{(\mu_j)\in M_{\delta}} \sum_{j=1}^r\EEs{\mu_j}{f_j}} = 0.$$
Thus, We consider the case $\lambda^*>0$.
From Chernoff inequality and Theorem~\ref{th:mgfbounddir}, 
\begin{align*}
&\PPs{(X_j)\sim\otimes_{j\in [r]}\DP\pa{\alpha_j \nu_j}
    }{\sum_{j=1}^r
    \EEs{X_j}{f_j} > \sup_{(\mu_j)\in M_{\delta}} \sum_{j=1}^r\EEs{\mu_j}{f_j}} \\&\leq \EEs{(X_j)\sim\otimes_{j\in [r]}\DP\pa{\alpha_j \nu_j}}{\exp\pa{\frac{1}{\lambda^*} \sum_{j=1}^r
    \EEs{X_j}{f_j}}}\exp\pa{-\frac{1}{\lambda^*} \sup_{(\mu_j)\in M_{\delta}} \sum_{j=1}^r\EEs{\mu_j}{f_j}}
\\&\leq
\exp\pa{\sup_{\pa{\mu_j}\in \cM_1\pa{\Omega}^r}\sum_{j=1}^r \pa{\frac{1}{\lambda^*}\EEs{\mu_j}{f_j} - \alpha_j\KL{\nu_j}{\mu_j}} - \frac{1}{\lambda^*}\sup_{(\mu_j)\in M_{\delta}} \sum_{j=1}^r\EEs{\mu_j}{f_j}}
\\&=
\delta 
\cdot
\exp\pa{\frac{1}{\lambda^*}g\pa{\lambda^*} - \frac{1}{\lambda^*}\sup_{(\mu_j)\in M_{\delta}} \sum_{j=1}^r\EEs{\mu_j}{f_j}} = \delta.
\end{align*}
For the second inequality, we have using Sion's minimax theorem
\begin{align*}
&\PPs{(X_j)\sim\otimes_{j\in [r]}\DP\pa{\alpha_j \nu_j}
    }{\sum_{j=1}^r
    \EEs{X_j}{f_j}\geq u} \\&\leq \exp\pa{\inf_{\lambda \geq 0}\sup_{\pa{\mu_j}\in \cM_1\pa{\Omega}^r}\lambda\pa{-u+\sum_{j=1}^r {\EEs{\mu_j}{f_j}}} - \sum_{j=1}^r\alpha_j\KL{\nu_j}{\mu_j}}
\\&= \exp\pa{\sup_{\pa{\mu_j}\in \cM_1\pa{\Omega}^r}\inf_{\lambda \geq 0}\lambda\pa{-u+\sum_{j=1}^r {\EEs{\mu_j}{f_j}}} - \sum_{j=1}^r\alpha_j\KL{\nu_j}{\mu_j}}
\\&=
\exp\pa{-\inf_{\substack{{(u_j)\in \R^r,} \\ {\sum_{j\in [r]} u_j = u}}}
 \sum_{j=1}^r \alpha_j\Kinf(\nu_j,u_j,f_j)}.
\end{align*}
\end{proof}
\section{Superadditivity for CGF of DPs}
\label{sec:main_proof}

\begin{proof}[Proof of Lemma~\ref{lem:superadditivedir}]

Without loss of generality, we can assume $0 \leq f \leq 1$ since $f\in \cC\pa{\Omega}$ is bounded as $\Omega$ is compact.

By the series decomposition of the exponential function, we observe that the desired inequality for $\alpha,\beta > 0$ can be obtained from the following moment inequality: for any integer $k \geq 0$,
\[\EEs{\pa{X,X'}\sim\DP\pa{\alpha \nu_0}\otimes \DP\pa{\beta \nu_0}}
{\pa{\EEs{X}{\alpha f}+\EEs{X'}{\beta f}}^k}
\leq \pa{\alpha+\beta}^k\EEs{X\sim\DP\pa{(\alpha+\beta) \nu_0}}{\EEs{X}{ f}^k}.
\]
This moment inequality can be reformulated as: for all $k\in \N,$ $Q_k\pa{f,\dots,f}\leq R_k\pa{f,\dots,f}$, where we define the following symmetric polynomials for $f_1,\dots,f_k$ measurable on $\Omega$:
  \begin{align*}
\textstyle Q_k\pa{f_1,\dots,f_k}&\textstyle\triangleq\EEs{\pa{X,X'}\sim\DP\pa{\alpha \nu_0}\otimes \DP\pa{\beta \nu_0}}{\prod_{\ell\in [k]}\pa{ \alpha 
\EEs{X}{f_{\ell}}
+ \beta 
\EEs{X'}{f_{\ell}}
}},
\\\textstyle R_k\pa{f_1,\dots,f_k}&\textstyle\triangleq (\alpha+\beta)^k\EEs{X\sim\DP\pa{(\alpha+\beta) \nu_0}}{\prod_{\ell\in [k]} 
\EEs{X}{f_{\ell}}
}.\end{align*}
Now, let $\pa{U_{\ell}}_{\ell\in [k]}\overset{iid}{\sim}\cU\pa{[0,1]}$ and define the indicator functions $F_{\ell}(\omega)\triangleq\II{U_{\ell}<f(\omega)}$. By Fubini's theorem, the multi-linearity of these symmetric polynomials, and using the independence of the uniform random variables, for all $k \in \N$,
\[
    \E\left[ Q_k(F_1,\ldots,F_k) \right] = Q_k(f,\ldots,f)\,,\qquad  \E\left[ R_k(F_1,\ldots,F_k) \right] = R_k(f,\ldots, f).
\]
It is thus sufficient to prove $Q_k(F_1,\ldots,F_k)\leq R_k(F_1,\ldots,F_k)$ with probability 1.
Let us consider the ordering $U_{(1)} \geq \ldots \geq U_{(k)}$ and let us define the corresponding sets 
$A_{\ell} \triangleq \{ \omega \in \Omega: U_{(\ell)} < f(\omega) \} $.
By Lemma~\ref{lem:dirichlet_products}, for any $\alpha > 0$ and $S\triangleq\sset{\ell_1,\dots,\ell_{\abs{S}}} \subset [k]\triangleq\sset{1,\dots,k}$, $\ell_1<\dots<\ell_{\abs{S}}$, we have
\[
    T(\alpha, S) \triangleq \EEs{X \sim \DP(\alpha \nu_0)}{ \prod_{\ell \in S} X(A_{\ell}) } = \prod_{j =1}^{\abs{S}}  \frac{\alpha \nu_0(A_{\ell_j}) + j - 1}{\alpha + j - 1}.
\]
In particular, if $k \in S$, then $k = \ell_{|S|}$ and
\begin{equation}\label{eq:T_alpha_S_recursion}
    T(\alpha, S) = T(\alpha, S \setminus \{ k \}) \cdot \frac{\alpha \nu_0(A_{k}) + |S|-1}{\alpha + |S| - 1}\,.
\end{equation}
Expanding the products inside the expectation in the definition of $Q_k$ and using the independence between $X$ and $X'$, we have
\[
    Q_k\pa{F_1,\dots,F_k} = \sum_{S \subset [k]} \underbrace{\alpha^{|S|} \beta^{k - |S|} T(\alpha, S) \cdot T(\beta, [k] \setminus S)}_{A(k,S)}.
\]
By \eqref{eq:T_alpha_S_recursion}, we get
\begin{align*}\sum_{S \subset [k]} A(k,S) &= \sum_{S \subset [k] : k\in S} A(k,S) + \sum_{S \subset [k] : k\notin S} A(k,S)
=
\sum_{S \subset [k-1]} \pa{A(k,S\cup\sset{k}) + A(k,S)}
\\&=
\sum_{S \subset [k-1]} \alpha^{|S|} \beta^{k-|S|}\pa{\frac{\alpha}{\beta}T(\alpha,S\cup\sset{k})T(\beta, [k-1]\backslash S) + T(\alpha,S)T(\beta, [k]\backslash S)}
\\&=
\sum_{S \subset [k-1]} A(k-1,S)  \left( \alpha \cdot \frac{\alpha \nu_0(A_{k}) + |S|}{\alpha + |S|} + \beta \cdot \frac{\beta \nu_0(A_{k}) + k  -1 -  |S|}{\beta + k - 1 - |S| } \right).
\end{align*}
Finally, we apply Proposition~\ref{prop:easyconcave} for $z = |S| \in [0, k-1]$ and get 
\[
     \alpha \cdot \frac{\alpha \nu_0(A_{k}) + |S|}{\alpha + |S|} + \beta \cdot  \frac{\beta \nu_0(A_{k}) + k - |S| -1}{\beta + k - |S| - 1}  \leq (\alpha + \beta)  \cdot \frac{(\alpha + \beta) \nu_0(A_k) + k - 1}{\alpha + \beta + k - 1},
\]
so that $\sum_{S \subset [k]} A(k,S)
\leq
(\alpha + \beta) \frac{(\alpha + \beta) \nu_0(A_k) + k - 1}{\alpha + \beta + k - 1}
\sum_{S \subset [k-1]} A(k-1,S)
$. 
By a simple induction on $ k\in \N$, we thus get
$\sum_{S \subset [k]} A(k,S)
\leq (\alpha + \beta)^k \prod_{\ell\in [k]}\frac{(\alpha + \beta) \nu_0(A_{\ell}) + \ell - 1}{\alpha + \beta + \ell - 1} = \pa{\alpha+\beta}^kT(\alpha+\beta,[k]) = R_k\pa{F_1,\dots,F_k}$.
\end{proof}

\begin{lemma}\label{lem:dirichlet_products}For any increasing sequence $A_1\subset\dots\subset A_m,~A_i\in \cB\pa{\Omega}$, $m\in \N^*$, \[\EEs{X \sim \DP(\alpha \nu_0)}{ \prod_{\ell \in [m]} X(A_{\ell}) } = \prod_{\ell\in [m]}\frac{\alpha\nu_0\pa{A_{\ell}} + \ell-1}{\alpha + \ell-1}.\]
\end{lemma}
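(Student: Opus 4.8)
The plan is to reduce the claimed mixed moment to a chain-rule computation for the generalized P\'olya urn (Blackwell--MacQueen) sequence associated with $\DP\pa{\alpha\nu_0}$. First I would sample, conditionally on $X\sim\DP\pa{\alpha\nu_0}$, an i.i.d.\ sequence $\theta_1,\dots,\theta_m$ from $X$. Since the conditional law of $\theta_\ell$ given $X$ is $X$ itself, $\EE{\II{\theta_\ell\in A_\ell}\mid X}=X(A_\ell)$, and since $\theta_1,\dots,\theta_m$ are conditionally independent given $X$ we get $\prod_{\ell\in[m]}X(A_\ell)=\EE{\prod_{\ell\in[m]}\II{\theta_\ell\in A_\ell}\mid X}$; taking expectations, $\EEs{X\sim\DP\pa{\alpha\nu_0}}{\prod_{\ell\in[m]}X(A_\ell)}=\PP{\theta_1\in A_1,\dots,\theta_m\in A_m}$. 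Thus it suffices to evaluate the right-hand probability under the marginal law of $(\theta_\ell)_{\ell\in[m]}$, which is the P\'olya urn: $\theta_1\sim\nu_0$ and $\PP{\theta_\ell\in\cdot\mid\theta_1,\dots,\theta_{\ell-1}}=\frac{\alpha\nu_0(\cdot)+\sum_{i<\ell}\delta_{\theta_i}(\cdot)}{\alpha+\ell-1}$, a standard consequence of Ferguson's finite-dimensional definition \cite{ferguson1973bayesian} together with the conjugacy of the DP.

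\textbf{The urn computation.} Second, I would carry out the computation, exploiting the nestedness $A_1\subset\dots\subset A_m$. Write $E_\ell\triangleq\sset{\theta_1\in A_1,\dots,\theta_\ell\in A_\ell}$ and $c_\ell\triangleq\frac{\alpha\nu_0\pa{A_\ell}+\ell-1}{\alpha+\ell-1}$. On $E_{\ell-1}$ every $\theta_i$ with $i<\ell$ lies in $A_i\subseteq A_\ell$, so $\sum_{i<\ell}\delta_{\theta_i}\pa{A_\ell}=\ell-1$ and therefore $\II{E_{\ell-1}}\,\PP{\theta_\ell\in A_\ell\mid\theta_1,\dots,\theta_{\ell-1}}=\II{E_{\ell-1}}\,c_\ell$ almost surely. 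Taking expectations and conditioning on $\theta_1,\dots,\theta_{\ell-1}$ gives $\PP{E_\ell}=c_\ell\,\PP{E_{\ell-1}}$; since $\PP{E_1}=\nu_0\pa{A_1}=c_1$, an immediate induction on $\ell$ yields $\PP{E_m}=\prod_{\ell\in[m]}c_\ell$, which is precisely the claimed identity. Note this route never divides by the measure of a set, so it covers the degenerate cases $\nu_0\pa{A_\ell}\in\sset{0,1}$ with no extra work.

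\textbf{Main obstacle and alternatives.} The computation itself is short; the only delicate point is the first step — identifying $\EEs{X}{\prod_\ell X(A_\ell)}$ with an urn probability, which rests on the conditional-independence description of i.i.d.\ draws from $X$ — and, if one wants a fully self-contained argument, deriving the P\'olya urn predictive directly from Ferguson's definition rather than quoting it. An alternative avoiding the urn is to pass to the partition $B_1\triangleq A_1$, $B_\ell\triangleq A_\ell\setminus A_{\ell-1}$ for $2\le\ell\le m$, and $B_{m+1}\triangleq\Omega\setminus A_m$, use that $\pa{X(B_1),\dots,X(B_{m+1})}\sim\Dir\pa{\alpha\nu_0(B_1),\dots,\alpha\nu_0(B_{m+1})}$, expand $\prod_{\ell}\sum_{i\le\ell}X(B_i)$, and apply the closed form for Dirichlet mixed moments; this is elementary but leaves a telescoping combinatorial identity to verify, so I would favour the urn route.
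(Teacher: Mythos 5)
Your proof is correct, but it takes a genuinely different route from the paper's. The paper argues by induction on $m$: it uses the neutrality property of the Dirichlet distribution to show that $X(A_m)$ is independent of the normalized restriction $X(\cdot\cap A_m)/X(A_m)$, which is itself a $\DP\pa{\alpha\nu_0(\cdot\cap A_m)}$, then applies the induction hypothesis to that smaller DP and the Beta moment formula for $\EE{X(A_m)^m}$. You instead identify the mixed moment with a Blackwell--MacQueen/P\'olya-urn probability via conditionally i.i.d.\ draws $\theta_1,\dots,\theta_m$ from $X$, and exploit the nestedness $A_1\subset\dots\subset A_m$ to make each predictive probability equal to the deterministic factor $\pa{\alpha\nu_0(A_\ell)+\ell-1}/\pa{\alpha+\ell-1}$, so the product falls out of the tower rule with no induction beyond a trivial telescoping. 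Your route is arguably cleaner in the degenerate cases: the paper's manipulation divides by $X(A_m)$ and applies the induction hypothesis to a DP with total mass $\alpha\nu_0(A_m)$, which formally requires $\nu_0(A_m)>0$ (when $\nu_0(A_m)=0$ both sides vanish, so the lemma still holds, but that case needs a separate word), whereas your argument never divides by anything. The trade-off is self-containedness: the paper quotes only the neutrality property and Beta moments, while you import the P\'olya-urn predictive, i.e.\ DP conjugacy plus the Blackwell--MacQueen characterization --- standard, but a heavier external ingredient, as you yourself note; if you wanted to match the paper's level of self-containment you would derive that predictive from Ferguson's finite-dimensional definition, which is essentially the same amount of work the paper spends on neutrality.
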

\begin{proof}
We proceed by induction. For $m=1$, this is simply the expectation formula for the DP. Assume this is true for $m-1$, for any DP and for any measurable increasing sequence of length $m-1$.
Now, let's fix some measurable increasing sequence $A_1\subset\dots\subset A_m\subset \Omega$. Consider any finite measurable partition $\sqcup_{i\in [k]}B_i = A_m$.
Then, by definition of the DP, $(X(B_1),\dots,X(B_k),1-X(A_m))\sim\Dir\pa{\alpha\nu_0(B_1),\dots,\alpha\nu_0(B_k),\alpha(1-\nu_0(A_m))}$.
By the neutrality property of the Dirichlet distribution (see e.g. \cite{james1980new}), it holds
$$X(A_{m}) \indep \pa{\frac{X(B_{1})}{X(A_{m})},\dots,\frac{X(B_{k})}{X(A_{m})}}\sim \Dir\pa{\alpha\nu_0(B_1),\dots,\alpha\nu_0(B_k)}.$$ This is true for all finite measurable partition $\sqcup_{i\in [k]}B_i = A_m$, so $X(A_{m}) \indep\frac{X\pa{\cdot \cap A_m}}{X(A_m)}\sim \DP\pa{\alpha\nu_0\pa{\cdot \cap A_m}}$. From the induction hypothesis on $\frac{X\pa{\cdot\, \cap A_m}}{X(A_m)}$ and the moment formula for the beta distribution, we get
\begin{align*}\EEs{X \sim \DP(\alpha \nu_0)}{ \prod_{\ell \in [m]} X(A_{\ell}) }&=
     \EEs{X \sim \DP(\alpha \nu_0)}{ \prod_{\ell \in [m-1]} \frac{X(A_{\ell})}{X(A_{m})} }\EEs{X \sim \DP(\alpha \nu_0)}{{X(A_{m})}^{m}}\\
     &=
     \prod_{\ell\in [m-1]}\frac{\alpha\nu_0\pa{A_{\ell}} + \ell-1}{\alpha\nu_0(A_m) + \ell-1}
     \cdot
     \prod_{\ell\in [m]} \frac{\alpha\nu_0\pa{A_{m}} +\ell-1}{\alpha +\ell-1}
     \\&=
     \prod_{\ell\in [m]}\frac{\alpha\nu_0\pa{A_{\ell}} + \ell-1}{\alpha + \ell-1}.
\end{align*}
\end{proof}

\begin{proposition}\label{prop:easyconcave} Let $s,t,j>0$ and let $x\in [0,1]$. For $z\in [0,j]$, we define
$$h(z)\triangleq{\frac{\pa{sx+z}s}{s+z}+\frac{\pa{tx+j-z}t}{t+j-z}}.$$
Then, 
$$\max_{z\in[0,j]}{h(z)}=h\pa{\frac{js}{s+t}}=\frac{(s+t)((s+t)x+j)}{s+t+j}.$$
\end{proposition}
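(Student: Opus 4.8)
The plan is to reduce the whole statement to a single convexity (Cauchy--Schwarz) inequality. First I would rewrite each summand of $h$ transparently: since $\frac{s(sx+z)}{s+z} = s - \frac{s^2(1-x)}{s+z}$ and likewise $\frac{t(tx+j-z)}{t+j-z} = t - \frac{t^2(1-x)}{t+j-z}$, one gets
$$h(z) = (s+t) - (1-x)\left(\frac{s^2}{s+z} + \frac{t^2}{t+j-z}\right).$$
If $x=1$ then $h\equiv s+t$, which equals the claimed value $\frac{(s+t)((s+t)\cdot 1+j)}{s+t+j}=s+t$, so that case is immediate; hence I would assume $x\in[0,1)$, so that $1-x>0$ and maximizing $h$ over $[0,j]$ becomes equivalent to minimizing $\phi(z)\triangleq \frac{s^2}{s+z}+\frac{t^2}{t+j-z}$ over $[0,j]$.

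Next I would make the linear substitution $u=s+z$, $v=t+j-z$, noting that $u+v = s+t+j$ is constant, and that $z\in[0,j]$ corresponds to $u\in[s,s+j]$ (and then $v\in[t,t+j]$, both positive). By the Cauchy--Schwarz inequality, for positive $u,v$,
$$\phi(z)=\frac{s^2}{u}+\frac{t^2}{v}\ \ge\ \frac{(s+t)^2}{u+v}=\frac{(s+t)^2}{s+t+j},$$
with equality exactly when $u/s=v/t$, i.e. $u=\frac{s(s+t+j)}{s+t}$, which corresponds to $z=u-s=\frac{sj}{s+t}$. A one-line check, using $s,t,j>0$, shows $\frac{sj}{s+t}\in(0,j)$, so this minimizer is admissible and $\min_{[0,j]}\phi=\frac{(s+t)^2}{s+t+j}$.

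Finally, substituting back gives $\max_{[0,j]}h = (s+t)-(1-x)\frac{(s+t)^2}{s+t+j}$, attained at $z=\frac{sj}{s+t}$, and an elementary rearrangement shows this equals $\frac{(s+t)((s+t)x+j)}{s+t+j}$, which is the claim. I do not expect a genuine obstacle here; the only points needing (minor) care are checking that the Cauchy--Schwarz equality point $z=\frac{sj}{s+t}$ indeed lies in $[0,j]$ (this is where $s,t,j>0$ is used) and tracking the sign of $1-x$ so that "maximize $h$" really turns into "minimize $\phi$". An alternative, more computational route would be to differentiate $h$, solve $h'(z)=0$ (a quadratic after clearing denominators) and check $h''<0$; but the Cauchy--Schwarz argument is cleaner and yields the bound for \emph{all} $z\in[0,j]$ at once rather than just a critical point.
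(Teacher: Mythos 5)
Your proof is correct, and it takes a genuinely different route from the paper. The paper argues by calculus: it computes $h'$ and $h''$, observes $h''\leq 0$ so $h$ is concave on $[0,j]$, solves $h'(z)=0$ to find the critical point $z^*=js/(s+t)$, and evaluates $h(z^*)$. You instead exploit the algebraic identity $h(z)=(s+t)-(1-x)\bigl(\tfrac{s^2}{s+z}+\tfrac{t^2}{t+j-z}\bigr)$, reduce to minimizing $\phi(z)=\tfrac{s^2}{s+z}+\tfrac{t^2}{t+j-z}$ under the constant-sum substitution $u=s+z$, $v=t+j-z$, and apply the Cauchy--Schwarz (Engel form) bound $\tfrac{s^2}{u}+\tfrac{t^2}{v}\geq\tfrac{(s+t)^2}{u+v}$, whose equality case $u/s=v/t$ recovers exactly $z=js/(s+t)\in(0,j)$; the final rearrangement to $\tfrac{(s+t)((s+t)x+j)}{s+t+j}$ checks out. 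Your approach buys a calculus-free argument that delivers the upper bound for every $z\in[0,j]$ in one stroke together with a clean equality condition, and it handles the degenerate case $x=1$ (where $h$ is constant and the paper's critical-point equation is vacuous) explicitly, whereas the paper's concavity argument is more mechanical and is the ``alternative computational route'' you yourself mention. Both proofs establish the statement; no gap in yours.
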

\begin{proof}
We have that $h$ is concave on $[0,j]$, as 
$$h'(z)=\frac{s^2(1-x)}{(s+z)^2}-\frac{t^2(1-x)}{(t+j-z)^2}$$
and
$$h''(z)= - \frac{2s^2(1-x)(s+z)}{(s+z)^4} -\frac{2t^2(1-x)(t+j-z)}{(t+j-z)^4} \leq 0.$$
Solving for $h'(z)=0$, we get that the maximizer of $h$ on $[0,j]$ is $z^*\triangleq js/(s+t)$.
Thus,
$$\max_{z\in[0,j]}{h(z)} = h(z^*)= \frac{(s+t)((s+t)x+j)}{s+t+j}.$$
\end{proof}

\section{Application to the Combinatorial Thompson Sampling policy}
\label{sec:cts}
Employing multiple independent DPs as described in our Corollary~\ref{cor:indepDPs} has practical applications in various contexts. One notable example is the stochastic semi-bandit problem  \cite{kveton2015tight}, an extension of the standard multi-armed bandits (MAB) problem. An example of a semi-bandit problem is formulated as follows. We consider a set of $n$ independent Bernoulli distributions (referred to as base arms), each with an unknown mean $\theta_k\in [0,1]$, $k \in [n] \triangleq \sset{1, \dots, n}$. At each round $t\in \N^*$, an agent selects an action $A_t\in \cA$, where $\cA\subset \cP([n])$ is a fixed action space. For each base arm $k$ in action $A_t$, an outcome $X_{k,t}\sim\text{Ber}\pa{\theta_k}$ is drawn independently from the environment and observed as feedback. The agent gains a reward $\sum_{k\in A_t}X_{k,t}$ before moving to the next round. The agent's goal is to minimize the expected regret over $T\in \N^*$ rounds, $R_T\triangleq T\sum_{k\in A^*}\theta_k - \sum_{t\in [T]}\EE{\sum_{k\in A_{t}}\theta_k}$, where $A^*\in\argmax_{A\in \cA}\sum_{k\in A}\theta_k$.

A commonly used policy for this problem is Combinatorial Thompson Sampling (CTS) \cite{wang2018thompson,perrault2020statistical}. In CTS, each base arm $k\in [n]$ is associated with a maintained 
prior distribution $\text{Beta}\pa{1+N_{k,t}\bar\theta_{k,t},1+N_{k,t}(1-\bar\theta_{k,t})}$, where, at the beginning of round $t$,
$N_{k,t}$ (resp. $\bar\theta_{k,t}$) is the number of observations (resp. empirical
mean) of base arm $k$. At round $t$, the agent draws, for each base arm $k$, an independent sample $\theta^+_{k,t}$ from the corresponding prior.
Then, the action to be played is chosen as $A_t\in\argmax_{A\in \cA}\sum_{k\in A}\theta^+_{k,t}$ (we assume that linear optimization is computationally efficient over $\cA$). 

CTS can be compared with two well-known alternative policies: CUCB \cite{chen2016combinatorial,pmlr-v89-perrault19a} and ESCB \cite{combes2015combinatorial}. 
Unlike CTS, which is based on sampling, these policies directly build a confidence region $C_{t}$, for the vector of outcomes, and then play an action $A_t\in\argmax_{A\in \cA}\max_{ (\theta^+_{1,t},\dots,\theta^+_{K,t})\in C_t}\sum_{k\in A} \theta^+_{k,t}$. CUCB is conservative but efficient, using the Cartesian product of the individual outcome confidence intervals, 
whereas ESCB leverages stochastic independence between the base arms but is generally inefficient. 
CTS strikes a good balance by leveraging independence while remaining efficient. These three policies are often qualified as optimistic, which essentially means that the estimates $\theta^+_{k,t}$ are such that the event $\sset{\sum_{k\in A^*}\theta_{k} \leq \sum_{k\in A^*}\theta^+_{k,t}}$ occurs with high probability.

To compare these policies in terms of expected regret, consider a simple semi-bandit instance where $\cA \triangleq \sset{\sset{1,\dots,m},\sset{m+1,\dots,2m},\dots,\sset{n-m+1,\dots,n}}$, with $n,m\in \N$ so that $\abs{\cA}=n/m\in \N$.
We assume that each base arm in an action $j\in [n/m]$ follows an independent Bernoulli distribution of parameter $p_j$ (so $\theta_k=p_{\lceil{k/m}\rceil}$ for $k\in [n]$), and that $p_1=\max_{j\in [n/m]} p_j$. 
This reduces to a MAB problem with $n/m$ actions and with a binomial reward $\text{Bin}(m,p_j)$ for each action $j\in [n/m]$.  First, using a result from \cite{Lai1985asymptotically}, we have the following lower bound on the asymptotic expected regret of any policy:  
\begin{align*}\liminf_{T \to\infty} \frac{R_T}{\log T} \geq \sum_{j=2}^{n/m} \frac{m(p_1-p_j)}{\KL{\text{Bin}(m,p_j)}{\text{Bin}(m,p_1)}} = \sum_{j=2}^{n/m} \frac{p_1-p_j}{\kl{p_j}{p_1}}.\label{cts:lower_bound}\end{align*}
Now, let us examine the upper bounds on \( \limsup_{T \to \infty} R_T/ \log T \), focusing on the specific semi-bandit instance described earlier for the sake of simplicity.
When considering versions of the policies based on a KL confidence region,
CUCB has an upper bound that is $m$ times larger than the lower bound mentioned earlier \cite{chen2016combinatorial}, whereas ESCB's upper bound matches this lower bound \cite{combes2015combinatorial}. Since CTS aims to match the statistical performance of ESCB, a natural question arises: Can CTS achieve the same upper bound as ESCB?
This question can be addressed using Corollary~\ref{cor:indepDPs},  as we will demonstrate next.
 
 Let $\varepsilon>0$. We have
 $R_T= {\sum_{j=2}^{n/m}\sum_{t=1}^T \PP{A_{t}=\text{ action }j}m(p_1 - p_j)},$ so it is sufficient to get $\sum_{t=1}^T \PP{A_{t}=\text{ action }j}\leq\frac{(1+\varepsilon)\log(T)}{m \kl{p_j}{p_1}} + o\pa{\log(T)}$. We can thus set aside the rounds where \( N_{jm,t} \leq \frac{(1+\varepsilon)\log(T)}{m \kl{p_j}{p_1}}\), constituting the leading bound (notice that all the counters $N_{(j-1)m+1,t},\dots,N_{jm,t}$ are equal). In addition, we can focus on the intersection of several high-probability events, which are listed as follows.
 \begin{itemize}
     \item The optimism event: $mp_1\leq \sum_{k\in A^*}\theta^+_{k,t}$.
     \item For all $k\in A_t$, ${\bar\theta_{k,t}}\leq{p_1}$.
     \item $(1+\varepsilon)\sum_{k\in A_t}\kl{\bar\theta_{k,t}}{p_1}\geq \sum_{k\in A_t}\kl{\theta_{k}}{p_1} = m\kl{p_j}{p_1}$ (see \cite{maillard2011finite}).
 \end{itemize}
 In summary, it is sufficient to show that these events are mutually exclusive for the remaining rounds where \( N_{jm,t} > \frac{(1+\varepsilon)\log(T)}{m \kl{p_j}{p_1}} \).
From the optimism event and the policy's definition we have
$mp_1\leq \sum_{k\in A^*}\theta^+_{k,t} \leq \sum_{k\in A_{t}}\theta^+_{k,t}.$ From Corollary~\ref{cor:indepDPs}, we get that this event holds with a conditional probability bounded by $\exp\pa{- N_{jm,t}\sum_{k\in A_t}\kl{\bar\theta_{k,t}}{p_1}}$. Thus, with high probability, $N_{jm}\sum_{k\in A_t}\kl{\bar\theta_{k,t}}{p_1} \leq \log(T)$. This, with \( N_{jm,t} > \frac{(1+\varepsilon)\log(T)}{m \kl{p_j}{p_1}} \), contradicts the last event listed above.

We have demonstrated, using a toy example, how our results can establish the statistical optimality of the CTS policy. These findings could be valuable in more general and practical contexts. Specifically, it would be interesting to show that the expected regret rate of CTS aligns with that of ESCB in problems where ESCB is computationally inefficient.

\section{Conclusion}\label{sec:ccl}

In this paper, we presented a new method for bounding the cumulant generating function (CGF) of Dirichlet Processes (DPs). The proposed non-asymptotic bound achieves asymptotic optimality as \( \alpha \to \infty \). It is expressed as the convex conjugate of \( \alpha \) times the large deviation principle rate function for the DP, represented by the reversed Kullback-Leibler divergence. This approach enables the construction of confidence regions for sums of independent DPs, making it useful for various applications.



\bibliographystyle{amsplain}
\bibliography{ref}

\end{document}